\documentclass[a4paper,11pt,fleqn]{article}
\usepackage{amsmath,amssymb,amsthm,graphicx,subfigure,float,caption,epstopdf,tabularx,color, bm,amsfonts,epic}
\usepackage[top=1in, bottom=1in, left=1.25in, right=1.25in]{geometry}
\usepackage{appendix}
\usepackage{multirow}
\usepackage{diagbox}
\allowdisplaybreaks
\newtheorem{thm}{Theorem}[section]
\newtheorem{lem}{Lemma}[section]

\newtheorem{rmk}{Remark}[section]
\newtheorem{shm}{Scheme}[section]
\newtheorem*{prf}{Proof}
\numberwithin{equation}{section}


\usepackage{indentfirst}
\graphicspath{{Fig}}

\begin{document}
\title{Explicit high-order energy-preserving methods for general Hamiltonian partial differential equations}
\author{Chaolong Jiang$^1$,\ Yushun Wang$^2$ and Yuezheng Gong$^3$\footnote{Correspondence author. Email:
gongyuezheng@nuaa.edu.cn.}\\
{\small $^1$ School of Statistics and Mathematics, }\\
{\small Yunnan University of Finance and Economics, Kunming 650221, P.R. China}\\
{\small $^2$ Jiangsu Provincial Key Laboratory for NSLSCS,}\\
{\small School of Mathematical Sciences,  Nanjing Normal University,}\\
{\small  Nanjing 210023, P.R. China}\\
{\small $^3$ College of Science,}\\
{\small Nanjing University of Aeronautics and Astronautics, Nanjing 210016, P.R. China}\\
}
\date{}
\maketitle

\begin{abstract}
A novel class of explicit high-order energy-preserving methods are proposed for general Hamiltonian partial differential equations with non-canonical structure matrix. When the energy is not quadratic, it is firstly done that the original system is reformulated into an equivalent form with a modified quadratic energy conservation law by the energy quadratization approach. Then the resulting system that satisfies the quadratic energy conservation law is discretized in time by combining explicit high-order Runge-Kutta methods with orthogonal projection techniques. The proposed schemes are shown to share the order of explicit Runge-Kutta method and thus can reach the desired high-order accuracy. Moreover, the methods are energy-preserving and explicit because the projection step can be solved explicitly. Numerical results are addressed to demonstrate the remarkable superiority of the proposed schemes in comparison with other structure-preserving methods.  \\[2ex]
\textbf{AMS subject classification:} 65M06, 65M70\\[2ex]
\textbf{Keywords:} explicit Runge-Kutta method, orthogonal projection, energy quadratization approach, Hamiltonian system.
\end{abstract}

\section{Introduction}
Hamiltonian partial differential equations (PDEs) play an important role in science and engineering, some particularly important examples include: quantum mechanics, fluid mechanics and electromagnetics ect. { The general form of Hamiltonian PDEs  with independent variables $({\bf x},t)\in\Omega\times [0,T]\subset \mathbb{R}^d\times \mathbb{R},\ d=1,2$ or 3, functions $z$ belonging to a Hilbert space $\mathcal{W}(\Omega)$ with values $z({\bf x},t)=[z_1({\bf x},t),z_2({\bf x},t),\cdots,z_m({\bf x},t)]^T\in\mathbb{R}^m$ is given by
\begin{align}\label{Hamiltionian-system}
\partial_t z = \mathcal{D}(z) \frac{\delta \mathcal{H}(z)}{\delta z},
\end{align}
where $\mathcal{D}(z)$ is a $m$-by-$m$ matrix
operator which is skew-adjoint for all $z$. Here, $\mathcal{H}:=\mathcal{H}(z)$ is the Hamiltonian energy functional, and $\frac{\delta \mathcal{H}(z)}{\delta z}$ is the variational derivative of the Hamiltonian
energy functional with respect to the variable $z$.} One of the most famous geometric characteristics of \eqref{Hamiltionian-system} is that the exact flow has the invariant (also called first integral) $\mathcal{H}$=const.  A numerical scheme that preserves one or more invariants of Hamiltonian PDEs \eqref{Hamiltionian-system} is known as an energy-preserving scheme or integral-preserving scheme. During the past decade, it has been shown that non-energy-preserving schemes may easily show nonlinear blow-up or lead to instability (see Ref. \cite{FM2011}). This is because such schemes may introduce truncation errors that destroy the
physical law numerically. In addition, the energy-preserving property has been showed to be a crucial role in the proof of stability, convergence, existence and uniqueness of the solution for numerical methods (e.g., see Ref. \cite{LQ95}).

As a matter of fact, over the years, there has been an increasing interest in energy-preserving numerical methods for convertive systems. In Ref. \cite{Cooper87}, Cooper proved that all RK methods conserve linear invariants and an irreducible RK method can preserve all quadratic invariants if and only if their coefficients (${\bm A}\in\mathbb{R}^{s\times s},\ {\bm b}\in\mathbb{R}^s$) satisfy $b_ia_{i,j}+b_ja_{j,i}-b_ib_j=0$ for all $i,j=1,\cdots,s.$ However, no RK method can preserve arbitrary polynomial invariants of degree 3 or higher of arbitrary vector fields \cite{CIZ97}. To overcome this difficulty, various different numerical methods which can preserve general invariants are proposed such as the discrete gradient method \cite{MQR99} (including the averaged vector field (AVF) method \cite{CS19jcp,CGM12,GCW14b,LW16b,QM08}), discrete variational derivative methods \cite{DO11,Furihata01,MF01jcp}, the local energy-preserving methods \cite{CS19jcp,JCW19jsc,WWQ08} and the Kahan's method \cite{ELS19}. However, to our best knowledge, most of the existing energy-preserving schemes are only second order in time, which can't provide long time accurate solutions with a given large time step. Thus, how to design high-order and energy-preserving numerical schemes for conservative systems has attracted much attention in recent years. The noticeable ones include the high-order AVF methods \cite{LWQ14,QM08,WWpla12}, Hamiltonian Boundary Value Methods (HBVMs) \cite{BCMR12,BI16,BIT10}, continuous stage Runge-Kutta (CSRK) methods \cite{CH11bit,H10,MB16,TS12} as well as projection methods \cite{BIT12siam,Hairer00bit,Kojima16bit}. More recently, Jiang et al \cite{JWG19} developed a class of arbitrarily high-order energy-preserving schemes for Camassa-Holm equation by combining the methodology of the invariant energy quadratization (IEQ) approach introduced in Refs. \cite{GZYW18,YZW17,ZYGW17} with a symplectic RK method (see e.g., Refs. \cite{Sanz-Sernabit88,SCbook94}). Other high-order energy-preserving schemes can be found in Refs. \cite{CMMOQWesiam09,Matsuo03,ZhangQSaml19}. Despite the exciting high-order schemes are energy-preserving and achieve high-order accuracy in time, all of them are fully implicit which further implies that the practical implementation of the methods is complicated and expensive unless a fast nonlinear solver is proposed \cite{BITjcam11,MB16}. 

Compared to the fully implicit methods, explicit ones are simple and easy to implementation. Thus, in the past few decades, there have been many existing attempts to develop explicit high-order energy-preserving methods. In Ref. \cite{BM02jcam}, del Buono and Mastroserio proposed an explicit fourth-order rational RK method which preserve a quadratic invariant. Later on, Calvo et al. \cite{CHMR06} developed a novel class of high-order explicit methods to preserve quadratic invariants in the numerical integration of the underlying system by using the incremental direction projection technique associated with explicit RK methods. Further studies on such method have been carried out in Ref. \cite{CLMR15}. However, for general invariants, their methods need to solve a nonlinear equations, at every time level, which may lead to expensive costs. Recently, Zhang et al. \cite{ZhangQYS19} constructed a class of explicit high-order numerical methods that can preserve a quadratic invariant for the perturbed Kepler two-body system and the one dimensional nonlinear Schr\"odinger equation, respectively, by using the incremental direction projection and the EQ strategy. In this paper, we will propose a framework for developing explicit high-order energy-preserving methods for general Hamiltonian PDEs \eqref{Hamiltionian-system}, based on the idea of the IEQ approach and the orthogonal projection technique. We first utilize the idea of the IEQ approach to reformulate the system \eqref{Hamiltionian-system} where the energy is not quadratic, into a reformulated system, which inherits a quadratic invariant. Then, the resulting system is solved by the orthogonal projection method associated with explicit RK methods. We show that the Lagrange multiplier of the projected
methods can be explicitly obtained, and the resulting methods can retain the order of the RK method. Thus, our methods are energy-preserving, explicit and can achieve desired high-order accuracy. Moreover, different from  Ref. \cite{ZhangQYS19}, in this paper, we mainly focus on the Hamiltonian PDEs \eqref{Hamiltionian-system} where the invariant is boundness (i.e., defined in the senses of a norm). This is because the boundness of  numerical solution can be directly obtained by these discrete invariants. Therefor it is valuable to expect that the proposed methods for this class of systems will produce richer information. For illustration purposes, we solve the two dimensional nonlinear Schr\"odinger equation and the one and two dimensional sine-Gordon equation to demonstrate the effectiveness of the selected new scheme, respectively. To show their accuracy and efficiency, we also compare our proposed scheme with the Gauss collocation method and the one provided by incremental direction projection.


The rest of this paper is organized as follows. In Section \ref{Sec:PM:2}, we use the idea of the IEQ approach to reformulate the system \eqref{Hamiltionian-system} into an equivalent form. In Section \ref{Sec:PM:3},
the explicit high-order energy-preserving schemes are introduced, and their energies-preservation are discussed. In Sections \ref{Sec:PM:4}, several numerical examples are shown to illustrate the power of our proposed explicit high-order schemes. We draw some conclusions in Section \ref{Sec:PM:7}.

\section{Model reformulation}\label{Sec:PM:2}
In this section, we apply the IEQ approach to reformulate the Hamiltonian PDEs \eqref{Hamiltionian-system} when the energy is not quadratic. The reformulated model satisfies a quadratic energy conservation law, which is equivalent to the original system in the continuous level. This process provides an elegant platform for developing explicit high-order energy-preserving schemes.

For the purpose of illustration, we assume the energy is given by the following
 \begin{align}\label{H-energy:2.1}
 \mathcal{H}=\frac{1}{2}(z,\mathcal{B}z) + \big(f(z),1\big),
 \end{align}
 where $\mathcal{B}$ is a linear, self-adjoint, positive definite operator, and $f(z)$ {is bounded from below (i.e., $f(z)\ge B_0$ for all $z$)} that only depends on $z$ itself, but not its spatial derivatives. {Here, $(\cdot,\cdot)$ represents the inner product defined by $(f,g)=\sum_{i=1}^m\int_{\Omega}f_ig_id{\bf x}$ for any $f,g\in(L^2(\Omega))^m$ and denote $\|\cdot\|$ as the $L^2({\Omega})$-norm, i.e., $\|f\|^2=(f,f)$.}

 Then, the energy \eqref{H-energy:2.1} can be rewritten as a quadratic form
 \begin{align}\label{H-energy:2.2}
 \mathcal{H}=\frac{1}{2}(z,\mathcal{B}z)+\frac{1}{2}\|q\|^2-C_0,
 \end{align}
 by introducing an auxiliary variable $q=\sqrt{2\Big(f(z)+\frac{C_0}{|\Omega|}\Big)},$ where $C_0$ is a constant large enough to make $q$ well-defined for all $z$ and {$|\Omega|=\int_{\Omega}d{\bf x}$}.

Denote $g(z) = \frac{f'(z)}{\sqrt{2\Big(f(z)+\frac{C_0}{|\Omega|}\Big)}}.$ We then reformulate the system \eqref{Hamiltionian-system} into an equivalent form
 \begin{align}\label{IEQ-Formulation}
 \left\lbrace
  \begin{aligned}
&\partial_t z = \mathcal{D}(z)\Big(\mathcal{B}z + g(z) q\Big),\\
&\partial_t q = g(z) \partial_t z,
 \end{aligned}\right.
  \end{align}
  with the consistent initial conditions
   \begin{align}\label{initial-condition}
 z({\bf x},0)=z_0({\bf x}),\ q({\bf x},0)=\sqrt{2\Big(f\big(z({\bf x},0)\big)+\frac{C_0}{|\Omega|}\Big)}.
  \end{align}
Denoting $
\Phi=\left[\begin{array}{cc}
              z \\
              q\\
             \end{array}
\right]$, the system \eqref{IEQ-Formulation} then can be written as the following compact form
\begin{align}\label{IEQ-Formulation-1}
\partial_t \Phi = \mathcal{G}(\Phi)\frac{\delta \mathcal{H}}{\delta \Phi},
\end{align}
with a quadratic energy \eqref{H-energy:2.2} and a modified structure matrix
{\begin{align*}
\mathcal{G}(\Phi) = \left[\begin{array}{cc}
1 \\
g(z)\\
\end{array}
\right] \mathcal{D}(z)
\Big[ 1, ~ g(z)\Big].
\end{align*}}
It is worth mentioning that $\mathcal{G}$ is still skew-adjoint for any $\Phi$, so that the quadratic energy conservation law is satisfied by the reformulated system \eqref{IEQ-Formulation-1} (or \eqref{IEQ-Formulation})
\begin{align}
\frac{d \mathcal{H}}{dt} = \left(\frac{\delta \mathcal{H}}{\delta \Phi} , \partial_t \Phi \right) =  \left(\frac{\delta \mathcal{H}}{\delta \Phi} , \mathcal{G}(\Phi)\frac{\delta \mathcal{H}}{\delta \Phi} \right) = 0.
\end{align}

\begin{rmk}\label{rmk2.1}
On the one hand, the energy quadratization reformulation is not necessary for conservation systems whose invariant is quadratic. For example, the mass of the nonlinear Schr\"{o}dinger equation is a quadratic invariant (see Section \ref{Sec:PM:4}).  On the other hand, the energy quadratization approach can also work for a more general $f$ which depends on $z$ and its spatial derivatives. If $f$ is unbounded from below, we can use the splitting strategy to divide $f$ into several differences which are bounded from below. Then the energy can be transformed into a quadratic form by introducing multiple auxiliary variables and the corresponding model reformulation can be derived (see \cite{JGCW19}). In addition, apart from the IEQ approach, the scalar auxiliary variable (SAV) approach proposed in Refs. \cite{SXY18,SXY19siamrev} is also an efficiently strategy to obtain  the reformulated models which admits a quadratic energy conservation law. {Here, we should note that the modified energy \eqref{H-energy:2.2} and the reformulated system \eqref{IEQ-Formulation-1} are equivalent to the original energy and system in the continuous level, but not for the discrete case. }
\end{rmk}

Since the EQ-reformulated form in \eqref{IEQ-Formulation-1} has a quadratic energy, we next discuss how to devise explicit high-order energy-preserving schemes for it.

 \section{Explicit temporal semi-discrete high-order energy-preserving methods}\label{Sec:PM:3}
 In this section, a class of explicit high-order energy-preserving methods is proposed for the energy-quadratized system \eqref{IEQ-Formulation-1} by utilizing explicit high-order RK methods and orthogonal projection techniques. For simplicity of notations, we denote $\mathcal{F}(\Phi) = \mathcal{G}(\Phi)\frac{\delta \mathcal{H}}{\delta \Phi}$ and then rewrite the EQ reformulation \eqref{IEQ-Formulation-1} into the following form
\begin{align}\label{model-general-field}
\partial_t \Phi = \mathcal{F}(\Phi),
\end{align}
which conserves the quadratic energy $\mathcal{H} = \frac{1}{2}(\Phi,\mathcal{L}\Phi)$ with $\mathcal{L} = \textrm{diag}(\mathcal{B},1).$ {We here mainly focus on developing temporal semi-discrete methods, thus, in the following discussions, the spatial variables of the PDE system \eqref{model-general-field} are still continuous.} Let $M$ is a positive integer and denote $t_{n}=n\tau,$ $n = 0,1,2,\cdots,M$ where $\tau=\frac{T}{M}$ is the time step. The approximation of the function $\Phi({\bf x},t)$ at time $t=t_n$ is denoted by $\Phi^n:=\Phi^n({\bf x}).$

\subsection{Explicit schemes for modular Hamiltonian PDE systems}\label{subsec:scheme1}
In this subsection, we consider the system \eqref{model-general-field} with the special energy $\mathcal{H} = \frac{1}{2}\|\Phi\|^2$, which is called a modular Hamiltonian system. Combining the explicit RK methods and the orthogonal projection technique, we obtain the following explicit energy-preserving methods for the modular conservative system:
\begin{shm}\label{scheme1}
	For given $\Phi^n$, $\Phi^{n+1}$ is calculated by the following two steps
	\begin{enumerate}
		\item Explicit RK: we compute $\widetilde{\Phi}^{n+1}$ using
		\begin{align}\label{ERK-step}
		\left\lbrace
		\begin{aligned}
		&k_1 = \mathcal{F}(\Phi^n),\quad k_i = \mathcal{F}\left(\Phi^n+\tau\sum_{j = 1}^{i-1}a_{ij}k_j\right), ~i = 2,\cdots,s,\\
		&\widetilde{\Phi}^{n+1} = \Phi^n+\tau\sum_{i = 1}^{s}b_{i}k_i,
		\end{aligned}\right.
		\end{align}
		where $b_i, a_{ij}$ are RK coefficients.
		\item Projection: we update $\Phi^{n+1}$ via
		\begin{align}\label{projection1}
		\Phi^{n+1} = \frac{\|\Phi^n\|}{\|\widetilde{\Phi}^{n+1}\|}\widetilde{\Phi}^{n+1},
		\end{align}	
{where $\|\Phi^n\|^2=(\Phi^n,\Phi^n)$ and $\|\widetilde{\Phi}^n\|^2=(\widetilde{\Phi}^n,\widetilde{\Phi}^n)$.}
	\end{enumerate}
\end{shm}

\begin{thm}\label{thm-3.1}
	If the explicit RK step has order $p$ and $\Phi^n \neq 0$, then there exists $\tau^*>0$ such that Scheme \ref{scheme1} has at least order $p$ for all $\tau\in(0,\tau^{*}]$. Moreover, Scheme \ref{scheme1} preserves the semi-discrete energy conservation law
	 \begin{align}\label{DECL}
	 \mathcal{H}^{n+1} = \mathcal{H}^n, \quad \mathcal{H}^{n} = \frac{1}{2}\|\Phi^{n}\|^2, \quad \forall n\geq 0.
	 \end{align}
\end{thm}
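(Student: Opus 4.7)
The plan is to prove the two claims in opposite order, since energy preservation is immediate from the definition and then serves as the backbone for the order argument.

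First I would verify the semi-discrete energy conservation. Since the projection step defines $\Phi^{n+1}$ as a positive scalar multiple of $\widetilde{\Phi}^{n+1}$ with scaling factor $\|\Phi^n\|/\|\widetilde{\Phi}^{n+1}\|$, a direct computation gives
\begin{align*}
\|\Phi^{n+1}\|^2 = \left(\frac{\|\Phi^n\|}{\|\widetilde{\Phi}^{n+1}\|}\right)^{\!2} \|\widetilde{\Phi}^{n+1}\|^2 = \|\Phi^n\|^2,
\end{align*}
so $\mathcal{H}^{n+1}=\frac{1}{2}\|\Phi^{n+1}\|^2 = \frac{1}{2}\|\Phi^n\|^2 = \mathcal{H}^n$. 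Implicit here is that $\|\widetilde{\Phi}^{n+1}\|\neq 0$, which I will justify as part of the order analysis below by choosing $\tau^{*}$ small enough.

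Next I would prove that Scheme \ref{scheme1} inherits order $p$. Let $\Phi(t;t_n,\Phi^n)$ denote the exact flow of \eqref{model-general-field} starting from $\Phi^n$ at $t_n$. Because the continuous flow preserves $\mathcal{H}=\tfrac12\|\cdot\|^2$, we have $\|\Phi(t_{n+1};t_n,\Phi^n)\|=\|\Phi^n\|$. Since the underlying explicit RK scheme has order $p$, the local truncation error satisfies
\begin{align*}
\widetilde{\Phi}^{n+1} - \Phi(t_{n+1};t_n,\Phi^n) = O(\tau^{p+1}),
\end{align*}
and by the reverse triangle inequality $\bigl|\|\widetilde{\Phi}^{n+1}\|-\|\Phi^n\|\bigr|=O(\tau^{p+1})$. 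The hypothesis $\Phi^n\neq 0$ gives $\|\Phi^n\|>0$, so there exists $\tau^{*}>0$ (depending on $\Phi^n$) with $\|\widetilde{\Phi}^{n+1}\|\ge \tfrac12\|\Phi^n\|>0$ for all $\tau\in(0,\tau^{*}]$; in particular the projection \eqref{projection1} is well-defined. Writing
\begin{align*}
\Phi^{n+1}-\widetilde{\Phi}^{n+1} = \left(\frac{\|\Phi^n\|-\|\widetilde{\Phi}^{n+1}\|}{\|\widetilde{\Phi}^{n+1}\|}\right)\widetilde{\Phi}^{n+1},
\end{align*}
the prefactor is $O(\tau^{p+1})$ while $\widetilde{\Phi}^{n+1}$ stays bounded, so $\Phi^{n+1}-\widetilde{\Phi}^{n+1}=O(\tau^{p+1})$. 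Combining the two local error estimates yields a one-step error $\Phi^{n+1}-\Phi(t_{n+1};t_n,\Phi^n)=O(\tau^{p+1})$, and a standard Lady Windermere's fan / global-to-local-error argument then produces a global error of order $p$.

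The main obstacle I anticipate is the bookkeeping needed to ensure that the step-size threshold $\tau^{*}$ and the bounds on $\|\widetilde{\Phi}^{n+1}\|$ remain uniform along the trajectory, rather than being chosen at a single step. Because the exact energy is conserved and the scheme exactly preserves $\|\Phi^n\|=\|\Phi^0\|$, the denominator in \eqref{projection1} is uniformly bounded below by a constant depending only on $\|\Phi^0\|$ and the smoothness of $\mathcal{F}$, so the per-step estimate can be made uniform in $n$ on any fixed time interval $[0,T]$. With this uniformity the global order-$p$ conclusion follows without further difficulty.
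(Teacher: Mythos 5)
Your proof is correct and follows essentially the same route as the paper: the RK local error plus exact energy conservation of the flow gives $\|\widetilde{\Phi}^{n+1}\| = \|\Phi^n\| + \mathcal{O}(\tau^{p+1})$, the hypothesis $\Phi^n\neq 0$ makes the projection well-defined for small $\tau$, the scaling factor is $1+\mathcal{O}(\tau^{p+1})$, and the exact energy preservation is immediate from \eqref{projection1}. Your additions (working with the local exact flow, the explicit Lady Windermere's fan step, and the uniform lower bound on the denominator via $\|\Phi^n\|=\|\Phi^0\|$) are sound refinements of what the paper leaves implicit, but they do not change the argument.
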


\begin{proof}
	 Since the explicit RK step has order $p$, we have the local error
	\begin{align}\label{local-error1}
	\widetilde{\Phi}^{n+1} = \Phi(t_n+\tau)+C_{p+1}(\Phi^n)\tau^{p+1}+\mathcal{O}({\tau^{p+2}}),
	\end{align}
	which leads to
	\begin{align}
	\|\widetilde{\Phi}^{n+1}\|^2 = \|\Phi(t_n+\tau)\|^2+\mathcal{O}({\tau^{p+1}}).
	\end{align}
{Here, $C_{p+1}(\Phi^n)$ represents a constant which is dependent on the exact solution $\Phi$ at $t=t_n$ and independent of $\tau$.} Then noticing that the modular conservative system \eqref{model-general-field} satisfies the energy conservation $\frac{1}{2}\|\Phi(t_n+\tau)\|^2 = \frac{1}{2}\|\Phi^0\|^2 = \frac{1}{2}\|\Phi^n\|^2,$ thus we obtain
	\begin{align}\label{Phititlde-relation}
	\|\widetilde{\Phi}^{n+1}\|^2 = \|\Phi^n\|^2+\mathcal{O}({\tau^{p+1}}).
	\end{align}
	According to \eqref{Phititlde-relation} and $\Phi^n \neq 0,$ there exists $\tau^*>0$ such that $\|\widetilde{\Phi}^{n+1}\|>0$ for all $\tau\in(0,\tau^{*}]$. Therefore, we can deduce from \eqref{Phititlde-relation}
	\begin{align}\label{projection-para-es}
	\frac{\|\Phi^n\|}{\|\widetilde{\Phi}^{n+1}\|} =1+\mathcal{O}({\tau^{p+1}}).
	\end{align}
	Using \eqref{local-error1}  and \eqref{projection-para-es} leads to
	\begin{align}
	\Phi^{n+1} = \frac{\|\Phi^n\|}{\|\widetilde{\Phi}^{n+1}\|}\widetilde{\Phi}^{n+1} = \Phi(t_n+\tau)+\mathcal{O}({\tau^{p+1}}).
	\end{align}
	The discrete energy conservation \eqref{DECL} is clearly held by following the projection step \eqref{projection1}.
	This completes the proof.
\end{proof}

\begin{rmk}
	If the initial condition is taken as $\Phi^0 = 0,$ then the modular conservation system here satisfies the zero exact solution. So we're only thinking about the case where $\Phi^0$ is non-zero. It's not hard to see that Scheme \ref{scheme1} can work when $\widetilde{\Phi}^{n+1} \neq 0.$ Eq. \eqref{Phititlde-relation} implies $\widetilde{\Phi}^{n+1} \neq 0$ for sufficiently small $\tau$. Therefore, if $\widetilde{\Phi}^{n+1}$ is calculated to be zero in numerical calculations, the time step $\tau$ needs to be reduced to recalculate $\widetilde{\Phi}^{n+1}$ that is not zero.
\end{rmk}

\begin{rmk}
	Quadratic invariants of the form $\mathcal{H}=\frac{1}{2}\|\Phi\|^2$ appear in many physical problems, such as the nonlinear Schr\"{o}dinger equation and the Korteweg-de Vries equation. The proposed methods here will be considered for the nonlinear Schr\"{o}dinger equation in this paper.
\end{rmk}

\begin{rmk}
	As a matter of fact, the projection step \eqref{projection1} is explicit and can be derived by the standard projection method
	\begin{align}
	\frac{1}{2}\|\Phi^{n+1}-\widetilde{\Phi}^{n+1}\|^2\longrightarrow \min \qquad \textrm{subject to} \qquad \frac{1}{2}\|\Phi^{n+1}\|^2 = \frac{1}{2}\|\Phi^n\|^2.
	\end{align}
	However, for general quadratic energy $\mathcal{H} = \frac{1}{2}(\Phi,\mathcal{L}\Phi)$ where $\mathcal{L} \neq I,$ the standard projection method always derives an implicit format, which needs to be solved by a nonlinear iteration.	
\end{rmk}
%

\subsection{Explicit schemes for general Hamiltonian PDE systems}\label{subsec:scheme2}
In this subsection, we are committed to developing explicit energy-preserving methods for the EQ system \eqref{model-general-field} with general quadratic energy $\mathcal{H} = \frac{1}{2}(\Phi,\mathcal{L}\Phi)$. Changing the standard projection method in Scheme \ref{scheme1} to the modified projection technique (see P. 111 of Ref. \cite{ELW06}), we derive the following explicit schemes for general Hamiltonian PDEs:
\begin{shm}\label{scheme2}
	For given $\Phi^n$, $\Phi^{n+1}$ is calculated by the following two steps
	\begin{enumerate}
		\item Explicit RK: we calculate $\widetilde{\Phi}^{n+1}$ through an explicit RK method \eqref{ERK-step}.
		\item Modified projection: we update $\Phi^{n+1}$ via
		\begin{align}\label{projection2}
		\Phi^{n+1} = (I+\lambda_n\mathcal{L})\widetilde{\Phi}^{n+1},
		\end{align}	
		where $\lambda_n$ is a constant given by
		 \begin{align}\label{lambda-value}
		\lambda_n = \frac{-\delta_n}{\beta_n+\sqrt{\beta_n^2-\alpha_n\delta_n}},
		\end{align}
		and $\alpha_n = (\widetilde{\Phi}^{n+1},\mathcal{L}^3\widetilde{\Phi}^{n+1}), ~\beta_n = (\widetilde{\Phi}^{n+1},\mathcal{L}^2\widetilde{\Phi}^{n+1}), ~\delta_n = (\widetilde{\Phi}^{n+1},\mathcal{L}\widetilde{\Phi}^{n+1})-(\Phi^{n},\mathcal{L}\Phi^{n}).$
	\end{enumerate}
\end{shm}

\begin{thm}\label{thm-DECL2}
	If the explicit RK step has order $p$ and $\mathcal{L}\Phi^n \neq 0$, then there exists $\tau^{*}>0$ such that Scheme \ref{scheme2} has at least order $p$ for all $\tau\in(0,\tau^{*}]$. Scheme \ref{scheme2} preserves the discrete energy conservation law
	\begin{align}\label{DECL2}
	\mathcal{H}^{n+1} = \mathcal{H}^n, \quad \mathcal{H}^{n} = \frac{1}{2}(\Phi^n,\mathcal{L}\Phi^n), \quad \forall n\geq 0.
	\end{align}
\end{thm}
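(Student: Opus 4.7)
The plan is to establish the two claims of Theorem \ref{thm-DECL2} in reverse order: first verify the discrete energy conservation law \eqref{DECL2} by direct algebra, and then use the local-error representation of the explicit RK step \eqref{local-error1} together with a smallness estimate for $\lambda_n$ to conclude that the projection preserves the RK order $p$. The unifying observation is that the modified projection \eqref{projection2} depends on a single scalar $\lambda_n$ which is determined by a single quadratic constraint (the energy conservation), and whose magnitude is controlled by the defect of $\widetilde{\Phi}^{n+1}$ from energy conservation.

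For the energy conservation, I would substitute \eqref{projection2} into $\mathcal{H}^{n+1}=\tfrac{1}{2}(\Phi^{n+1},\mathcal{L}\Phi^{n+1})$ and use the self-adjointness of $\mathcal{L}$ to expand
\begin{equation*}
2\mathcal{H}^{n+1}=(\widetilde{\Phi}^{n+1},\mathcal{L}\widetilde{\Phi}^{n+1})+2\lambda_n(\widetilde{\Phi}^{n+1},\mathcal{L}^2\widetilde{\Phi}^{n+1})+\lambda_n^2(\widetilde{\Phi}^{n+1},\mathcal{L}^3\widetilde{\Phi}^{n+1}).
\end{equation*}
Imposing $\mathcal{H}^{n+1}=\mathcal{H}^n$ is then exactly the quadratic equation $\alpha_n\lambda_n^2+2\beta_n\lambda_n+\delta_n=0$ in the notation of Scheme \ref{scheme2}. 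The standard quadratic formula gives $\lambda_n=(-\beta_n\pm\sqrt{\beta_n^2-\alpha_n\delta_n})/\alpha_n$, and multiplying the $+$ branch by its conjugate yields precisely the formula \eqref{lambda-value}; this is the branch that vanishes as $\delta_n\to 0$, so \eqref{DECL2} follows whenever $\lambda_n$ from \eqref{lambda-value} is real, which will be ensured by the smallness argument below.

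For the order of accuracy, I would adopt the local-error convention $\Phi(t_n)=\Phi^n$ and exploit the fact that the continuous reformulated system \eqref{model-general-field} conserves $\mathcal{H}$ exactly, so $(\Phi(t_n+\tau),\mathcal{L}\Phi(t_n+\tau))=(\Phi^n,\mathcal{L}\Phi^n)$. Combining this identity with the RK expansion \eqref{local-error1} gives
\begin{equation*}
\delta_n=(\widetilde{\Phi}^{n+1},\mathcal{L}\widetilde{\Phi}^{n+1})-(\Phi(t_n+\tau),\mathcal{L}\Phi(t_n+\tau))=\mathcal{O}(\tau^{p+1}).
\end{equation*}
Meanwhile $\beta_n=\|\mathcal{L}\widetilde{\Phi}^{n+1}\|^2\to\|\mathcal{L}\Phi^n\|^2>0$ as $\tau\to 0$ by continuity of the RK increment and the hypothesis $\mathcal{L}\Phi^n\neq 0$, so there exists $\tau^{*}>0$ such that for every $\tau\in(0,\tau^{*}]$ one has $\beta_n$ bounded below, the radicand $\beta_n^2-\alpha_n\delta_n$ positive, and consequently $\lambda_n=\mathcal{O}(\tau^{p+1})$ from \eqref{lambda-value}. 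This yields $\Phi^{n+1}-\widetilde{\Phi}^{n+1}=\lambda_n\mathcal{L}\widetilde{\Phi}^{n+1}=\mathcal{O}(\tau^{p+1})$, which together with \eqref{local-error1} produces a local truncation error of size $\mathcal{O}(\tau^{p+1})$; a standard local-to-global error argument for one-step methods then promotes this to global order $p$.

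The delicate step is controlling the non-degeneracy $\beta_n$ uniformly for the PDE setting, since $\mathcal{L}=\mathrm{diag}(\mathcal{B},1)$ involves the (generally unbounded) operator $\mathcal{B}$; the RK expansion must therefore be read in a norm strong enough that $\|\mathcal{L}\widetilde{\Phi}^{n+1}-\mathcal{L}\Phi^n\|\to 0$. A secondary but important point is justifying the branch selection in \eqref{lambda-value}: the chosen root is the one of size $\mathcal{O}(\tau^{p+1})$ while the other root is of size $\mathcal{O}(1)$ (namely $\approx -2\beta_n/\alpha_n$) and would produce a spurious large perturbation destroying the accuracy claim even though it still satisfies the energy identity.
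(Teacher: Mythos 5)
Your proposal is correct and follows essentially the same route as the paper's proof: the RK local error expansion combined with the exact energy conservation of the reformulated continuous system gives $\delta_n=\mathcal{O}(\tau^{p+1})$, the hypothesis $\mathcal{L}\Phi^n\neq 0$ together with self-adjointness gives $\beta_n>0$ and $\beta_n^2-\alpha_n\delta_n>0$ for small $\tau$, hence $\lambda_n=\mathcal{O}(\tau^{p+1})$ and order $p$, while the energy identity \eqref{DECL2} follows by direct substitution of \eqref{projection2}. Your explicit derivation of the quadratic $\alpha_n\lambda_n^2+2\beta_n\lambda_n+\delta_n=0$ and the rationalized small root is exactly the ``direct calculation'' the paper leaves implicit, so there is no substantive difference in approach.
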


\begin{proof}
	  Since the explicit RK step has order $p$, we have the local error
	  \begin{align}\label{local-error2}
	  \widetilde{\Phi}^{n+1} = \Phi(t_n+\tau)+\mathcal{O}({\tau^{p+1}}) = \Phi^n + \mathcal{O}(\tau),
	  \end{align}
	  which leads to
	  \begin{align}
	  & \alpha_n = (\Phi^n,\mathcal{L}^3\Phi^n) + \mathcal{O}(\tau),\label{alpha-err}\\
	  & \beta_n  = (\Phi^n,\mathcal{L}^2\Phi^n) + \mathcal{O}(\tau),\label{beta-err}\\
	  & \delta_n = \big(\Phi(t_n+\tau),\mathcal{L}\Phi(t_n+\tau)\big)-(\Phi^{n},\mathcal{L}\Phi^{n}) + \mathcal{O}({\tau^{p+1}}).
	  \end{align}
	  Noticing the energy conservation $\frac{1}{2}\big(\Phi(t_n+\tau),\mathcal{L}\Phi(t_n+\tau)\big) = \frac{1}{2} (\Phi^{0},\mathcal{L}\Phi^{0}) = \frac{1}{2} (\Phi^{n},\mathcal{L}\Phi^{n}),$ we obtain
	  \begin{align}
	  \delta_n = \mathcal{O}({\tau^{p+1}}). \label{delta-err}
	  \end{align}
	  Using \eqref{alpha-err}, \eqref{beta-err} and \eqref{delta-err}, we can derive
	  \begin{align}
	  \beta_n^2-\alpha_n\delta_n = (\Phi^n,\mathcal{L}^2\Phi^n)^2 + \mathcal{O}(\tau).\label{delta}
	  \end{align}
	{Since $\mathcal{L}$ is self-adjoint and we suppose $\mathcal{L}\Phi^n \neq 0,$ we have $(\Phi^n,\mathcal{L}^2\Phi^n)=(\mathcal{L}\Phi^n,\mathcal{L}\Phi^n)>0,$ for all $n$.} Then, according to \eqref{beta-err} and \eqref{delta}, there exists $\tau^*>0$ such that $\beta_n>0, ~ \beta_n^2-\alpha_n\delta_n >0$ for all $\tau\in(0,\tau^{*}]$, which makes $\lambda_n$ well-defined. In addition, we can derive from Eqs. \eqref{beta-err}, \eqref{delta-err} and \eqref{delta}
	  \begin{align}
	  \lambda_n = \mathcal{O}({\tau^{p+1}}).
	  \end{align}
	  Therefore, Scheme \ref{scheme2} has at least order $p$.
	
	  By a direct calculation, the discrete energy conservation \eqref{DECL2} is readily proved by following the projection step \eqref{projection2}. This completes the proof.
\end{proof}

\begin{rmk}
	For general conservation system, the energy $\mathcal{H} = \frac{1}{2}(\Phi,\mathcal{L}\Phi)$ is generally nonzero, which implies $\mathcal{L}\Phi \neq 0.$ If $\beta_n$ is calculated to be zero or $\beta_n^2-\alpha_n\delta_n \leq 0,$ the time step $\tau$ needs to be reduced to recalculate $\widetilde{\Phi}^{n+1}$ so that $\beta_n>0,  \beta_n^2-\alpha_n\delta_n >0.$
\end{rmk}

\begin{rmk}\label{rmk:3.5}
	In this paper, we focus on the case that the operator $\mathcal{L}$ is self-adjoint and positive definite so that the energy $\mathcal{H}$ is bounded. In this case, the developed energy-preserving methods may produce some unexpected nonlinear stability. On the one hand, we give numerical examples in a classical explicit RK method of order 4, where their coefficients are given by the following Butcher table
\begin{table}[H]
\centering
{\begin{tabular}{c|ccccc}
${c_1}$ & $\ $& $\ $&$\ $&$\ $ \\
${c_2}$ &$a_{21}$& $\ $&$\ $&$\ $ \\
${c_3}$ & $a_{31}$& $a_{32}$&$\ $&$\ $ \\
${c_4}$ & $a_{41}$& $a_{42}$&$a_{43}$&$\ $ \\
\hline
& $b_1$ &$b_2$&$b_3$&$b_4$\\
\end{tabular}
=
\begin{tabular}{c|cccc}
0& \ &\  &\ &\   \\
1/2 & 1/2 &\ &\ &\ \\
1/2 & 0 & 1/2 & \ &\  \\
1& 0 & 0 & 1&\   \\
\hline
& 1/6 &2/6 & 2/6&1/6\\
\end{tabular}.}
\end{table}
\noindent{On the other hand, we have replaced the classical RK method with the well known DOPRI5(4) of Dormand-Prince \cite{DP80jcam} that
allows a variable step size for the proposed schemes in following practice computations.}
\end{rmk}
\begin{rmk} {To prevent the numerical accumulation of roundoff errors in energy, we replace $\Phi^n$ in \eqref{projection1} and \eqref{lambda-value} with $\Phi^0$, respectively, in following numerical simulations.}

\end{rmk}

\section{Numerical results}\label{Sec:PM:4}
In the previous sections, we present some explicit high-order energy-preserving schemes for general Hamiltonian PDEs. In this section, we apply the proposed \textbf{Schemes \ref{scheme1}} and \textbf{\ref{scheme2}} to solve two benchmark Hamiltonian PDE systems, the nonlinear Schr\"odinger (NLS) equation and the sine-Gordon (SG) equation, respectively, where the periodic boundary condition is considered. {For the spatial discretization, we shall pay special attention the following three aspects: } {\begin{itemize}
\item preserve the skew-adjoint property of the operator $\mathcal{G}(\Phi)$ and the self-adjoint, positive definite property of the operator $\mathcal{L}$;
\item preserve the discrete integration-by-parts formulae \cite{DO11};
\item is high-order accuracy which is comparable to that of the time-discrete methods.
\end{itemize}
Based on these statements and the boundary condition, the standard Fourier pseudo-spectral method is employed which is omitted here due to space limitation. Interested readers are referred to Refs.  \cite{CQ01,ST06} for details.}

\subsection{Nonlinear Schr\"odinger equation}\label{sec:4.1}

We consider the nonlinear Schr\"odinger equation given as follows
\begin{align}\label{NLS-equation}
&\text{i}\partial_tu+\Delta u+\beta|u|^2u=0,
  \end{align}
where $\text{i}=\sqrt{-1}$ is the complex unit, $u$ is the complex-valued wave function, $\Delta$ is the usual Laplace operator, and $\beta$ is a given real constant.

If we suppose $u=p+\text{i}q$, the nonlinear Schr\"odinger equation \eqref{NLS-equation} is rewritten into
\begin{align}\label{NLS-equivalent-equation}
\left \{
 \aligned
&\partial_t p=-\Delta q-\beta(p^2+q^2)q,\\
&\partial_t q=\Delta p+\beta(p^2+q^2)p.
\endaligned
 \right.
\end{align}
Denoting $
\Phi=\left[\begin{array}{cc}
              p \\
              q\\
             \end{array}
\right]$, then the system \eqref{NLS-equivalent-equation} is equivalently transformed into the system \eqref{IEQ-Formulation-1}
with a quadratic invariant
\begin{align}\label{NLS-mass-law}
\mathcal{H} = \frac{1}{2}\|\Phi\|^2 =  \frac{1}{2}(\|p\|^2+\|q\|^2),
\end{align}
and a structure matrix
\begin{align*}
 \mathcal{G}(\Phi)=\left[\begin{array}{cc}
              \ & -\Delta-\beta(p^2+q^2) \\
              \Delta+\beta(p^2+q^2)& \ \\
             \end{array}
\right].
\end{align*}

Applying \textbf{Scheme \ref{scheme1}} to system \eqref{NLS-equivalent-equation}, we obtain
\begin{shm}\label{scheme-NLS-equation} For given $(p^n,q^n)$, $(p^{n+1}, q^{n+1})$ is calculated by the following two steps
\begin{enumerate}
		\item {Explicit RK: we compute $(\widetilde{p}^{n+1},\widetilde{q}^{n+1})$ by using the explicit RK4 method (see remark \ref{rmk:3.5}) to the system \eqref{NLS-equivalent-equation}.}
\item Projection: we update $(p^{n+1},q^{n+1})$ via
\begin{align}\label{NLS-explict-projection}
p^{n+1} = \sqrt{\frac{\|p^n\|^2+\|q^n\|^2}{\|\widetilde{p}^{n+1}\|^2+\|\widetilde{q}^{n+1}\|^2}} \widetilde{p}^{n+1},\ q^{n+1} = \sqrt{\frac{\|p^n\|^2+\|q^n\|^2}{\|\widetilde{p}^{n+1}\|^2+\|\widetilde{q}^{n+1}\|^2}} \widetilde{q}^{n+1}.
\end{align}	
\end{enumerate}
\end{shm}

For simplicity of notation, we shall introduce the discrete quadratic invariant in
one space dimension, i.e. $d=1$ in \eqref{NLS-equivalent-equation} and Section \ref{sec:4.2}. Generalizations to $d=2$ are straightforward. Choose the spatial domain $\Omega=[a,b]$ and the mesh size $h=(b-a)/N$, where $N$ is an even positive integer, and denote the grid points by $x_{j}=jh$ for $j=0,1,2,\cdots,N$; let $P_{j}^n$ and $Q_{j}^n$ be the numerical approximations of  $p(x_j,t_n)$ and $q(x_j,t_n)$ for $n=0,1,2,\cdots,M$ and $j=0,1,2,\cdots,N$, and $P^n:=[p_0^n,p_1^n,\cdots,p_{N-1}^n]^T$, $Q^n:=[q_0^n,q_1^n,\cdots,q_{N-1}^n]^T$ be the numerical solution vectors. In addition, for any grid function vectors $U^n$ and $V^n$, we define the discrete inner product and $L^2$-norm as follows
\begin{align*}
&\langle U^n,V^n\rangle_{h}=h\sum_{j=0}^{N-1}U^n_{j}V^n_{j},\ ||U^n||_{h}^2=h\sum_{j=0}^{N-1}(U^n_{j})^2.
\end{align*} According to Theorem \ref{thm-3.1}, the {\bf Scheme 3.1} preserves the following discrete mass conservation law
{\begin{align}\label{IFD-energy-conservation-law}
H^{n}=\frac{1}{2}\big(\|P^n\|_h^2+\|Q^n\|_h^2\big),\ \ n=1,\cdots,M.
\end{align}
In order to quantify the residuals of the discrete mass conservation law, we use the relative mass residual between the discrete mass (i.e., ${H}^n$) at $t=t_n$ and the initial discrete ones, respectively, as
\begin{align}
RM^n=\Big|\frac{{H}^n-{H}^0}{{H}^0}\Big|,\ n=0,1,2,\cdots,M.
\end{align}}

\begin{rmk}\label{rmk:4.1} {Since the original energy is quadratic, the {\bf Scheme 4.1} can preserve the discrete energy in the original variables.}
\end{rmk}

First of all, we present the time mesh refinement tests to show the order of accuracy of the proposed scheme. Also, the following high-order schemes for preserving the discrete quadratic energy are chosen for comparisons:
\begin{itemize}
\item 4th-order GM: the 2-stage Gauss method described in Refs. \cite{Sanz-Sernabit88,SCbook94};
\item 4th-order PM: the projection method for quadratic invariants proposed in Refs. \cite{CHMR06,ZhangQYS19}.
\item {DOPRI5(4): we replace the explicit RK4  method (see Remark \ref{rmk:3.5}) with the well known DOPRI5(4) of Dormand-Prince in Ref. \cite{DP80jcam} at each time level. Note that, in our computations, we implement DOPRI5(4) by the matlab library function ode45.m.}
\end{itemize}

As a summary, the properties of these schemes have been given in Tab. \ref{Tab-IPs:1}.
\begin{table}[H]
\tabcolsep=6pt
\small
\renewcommand\arraystretch{1.12}
\centering
\caption{Comparison of properties of different numerical schemes}\label{Tab-IPs:1}
\begin{tabular*}{\textwidth}[h]{@{\extracolsep{\fill}}c c c c c c c c c }\hline 
 \diagbox{Property}{Scheme}& 4th-order GM& 4th-order PM& The proposed method \\\hline
  Conserving quadratic energy& Yes& Yes &Yes \\[1ex]
 Temporal accuracy& 4th&4th&4th \\[1ex]
 Explicit &No&Yes&Yes\\
\hline
\end{tabular*}
\end{table}

The first test example is the nonlinear Schr\"{o}dinger equation \eqref{NLS-equation} in one dimension, which has a single soliton solution given by \cite{HV2003}
\begin{align}
u(x,t)=\sqrt{\frac{2\alpha}{\beta}}e^{\text{i}(\frac{1}{2}cx-(\frac{1}{4}c^2-\alpha)t)}\text{sech}(\sqrt{\alpha}(x-ct)),\ x\in\mathbb{R},\ t>0,
\end{align}
where $\alpha$ and $c$ are two real parameters.

The computations are done on the space interval $\Omega=[-40,40]$ and choose parameters $\alpha=1,\ \beta=2$ and $c=4$. To test the temporal discretization errors of the three numerical schemes, we fix the Fourier node $800$ so that spatial errors play no role here.
\begin{figure}[H]
\centering\begin{minipage}[t]{60mm}
\includegraphics[width=60mm]{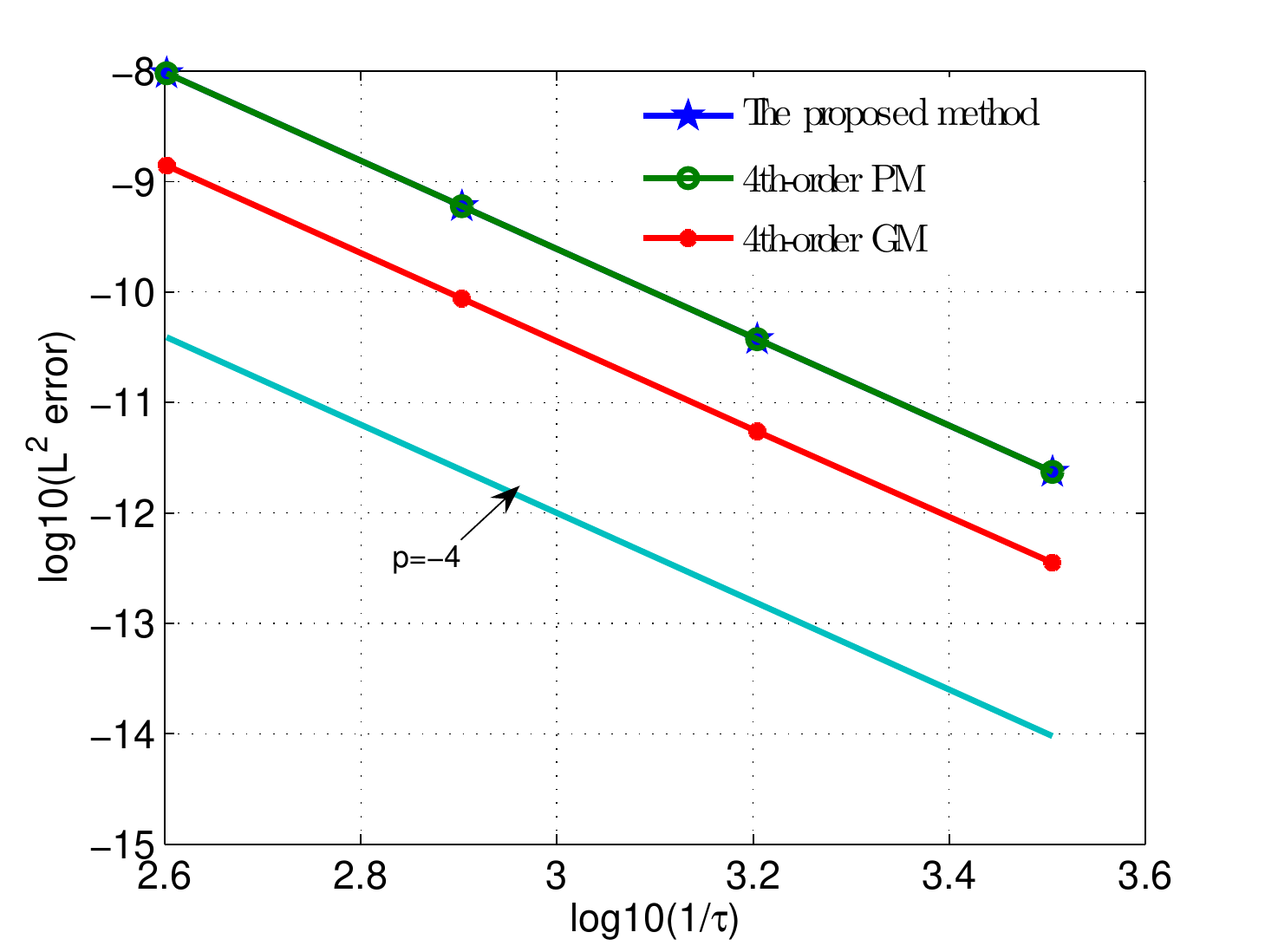}
\end{minipage}
\begin{minipage}[t]{60mm}
\includegraphics[width=60mm]{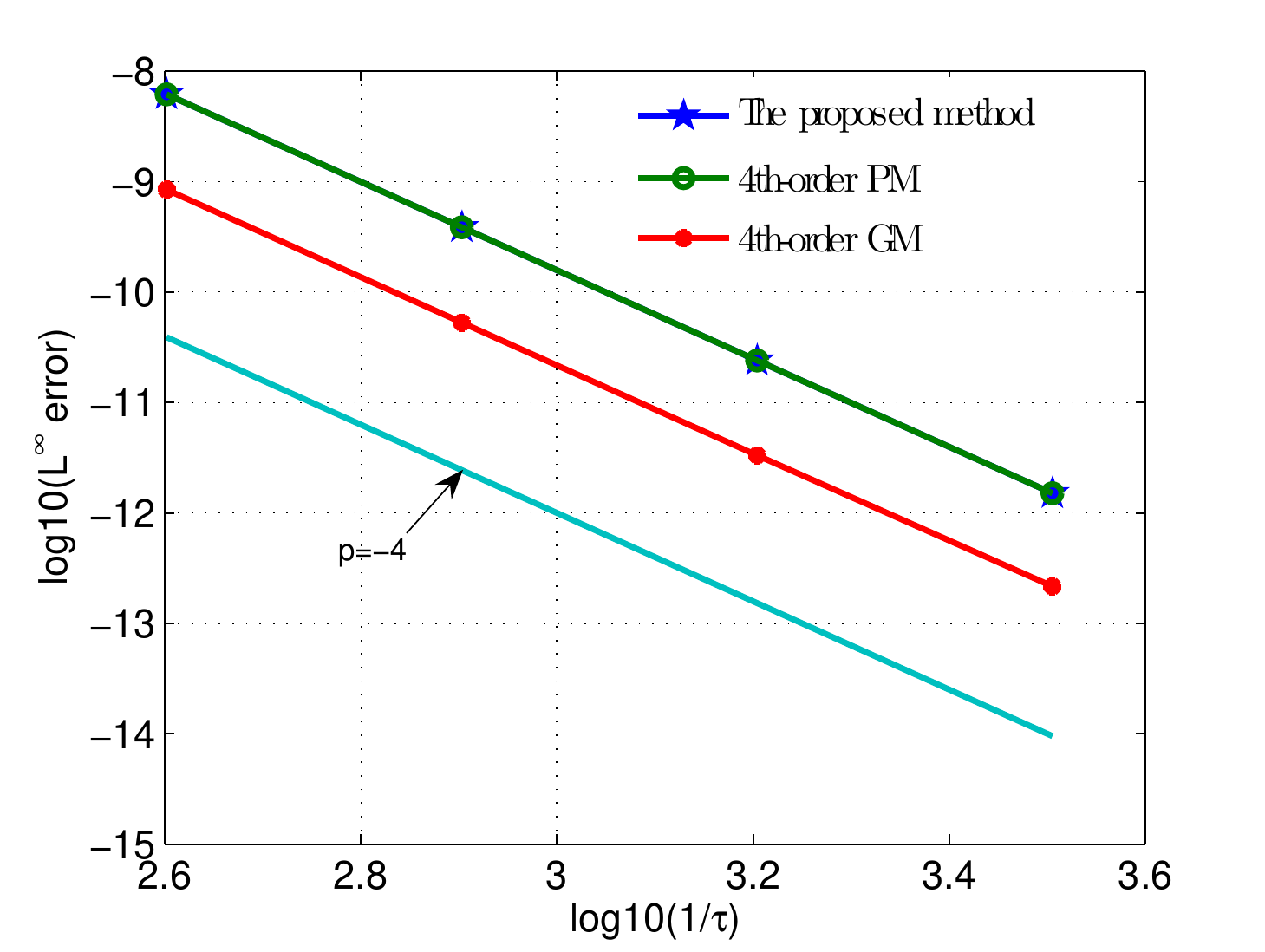}
\end{minipage}
\caption{ Time step refinement tests using the different numerical schemes for the one dimensional Schr\"odinger equation \eqref{NLS-equation}.}\label{1d-scheme:fig:1}
\end{figure}

\begin{figure}[H]
\centering\begin{minipage}[t]{60mm}
\includegraphics[width=60mm]{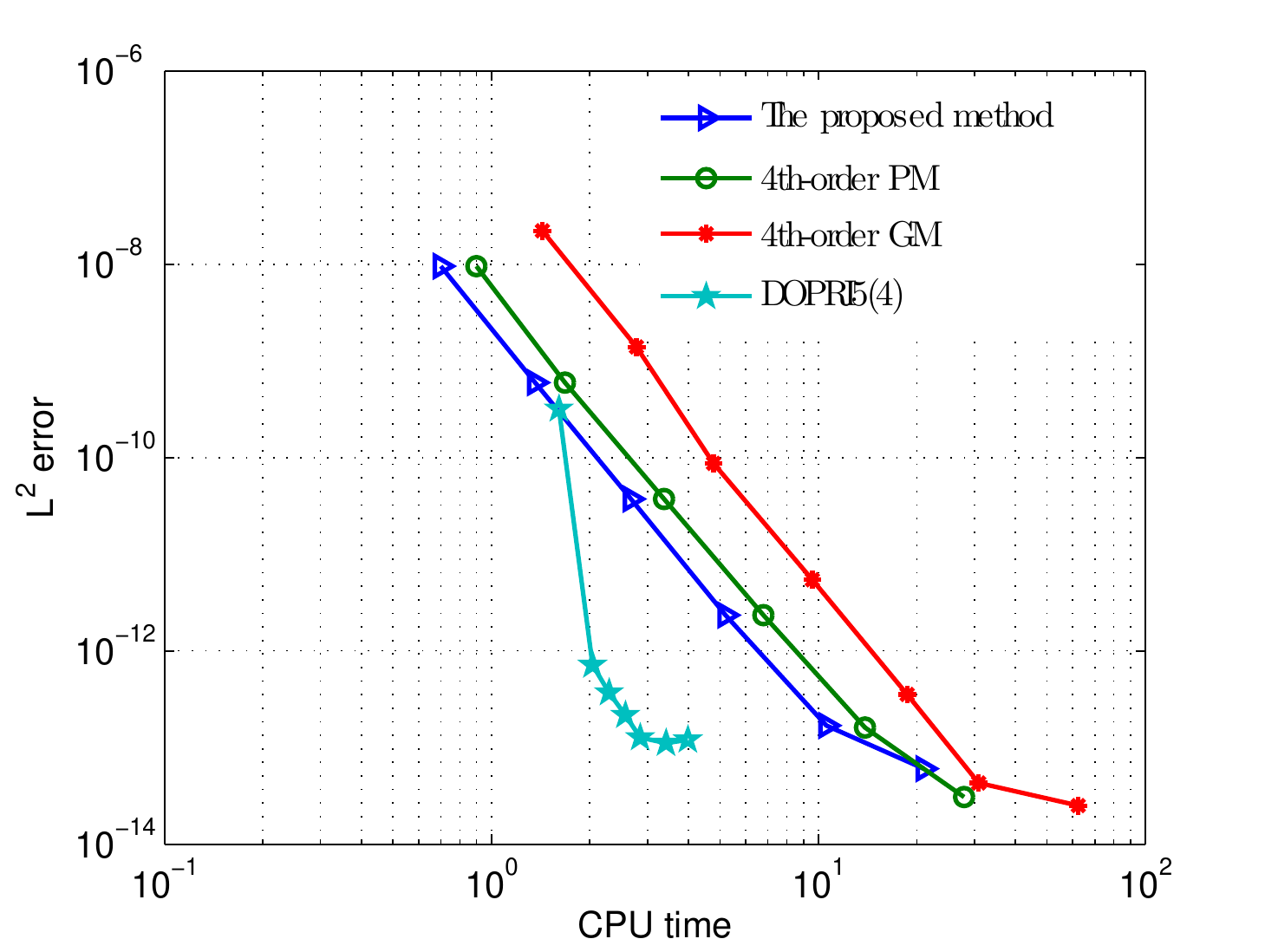}
\end{minipage}
\begin{minipage}[t]{60mm}
\includegraphics[width=60mm]{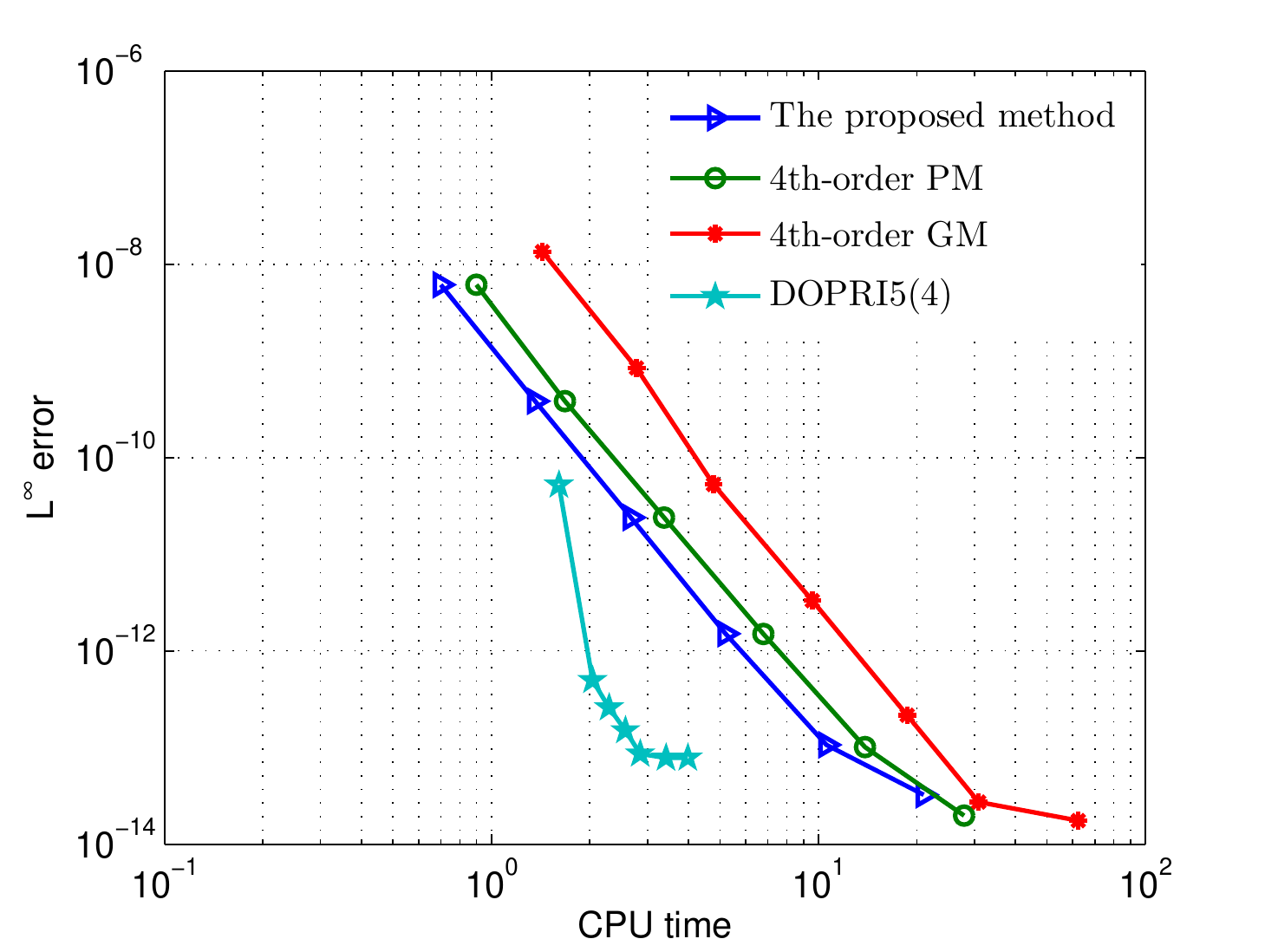}
\end{minipage}
\caption{ The numerical error versus the CPU time using the different numerical schemes for the one dimensional Schr\"odinger equation \eqref{NLS-equation}.}\label{1d-scheme:fig:2}
\end{figure}

\begin{figure}[H]
\centering
\begin{minipage}[t]{70mm}
\includegraphics[width=70mm]{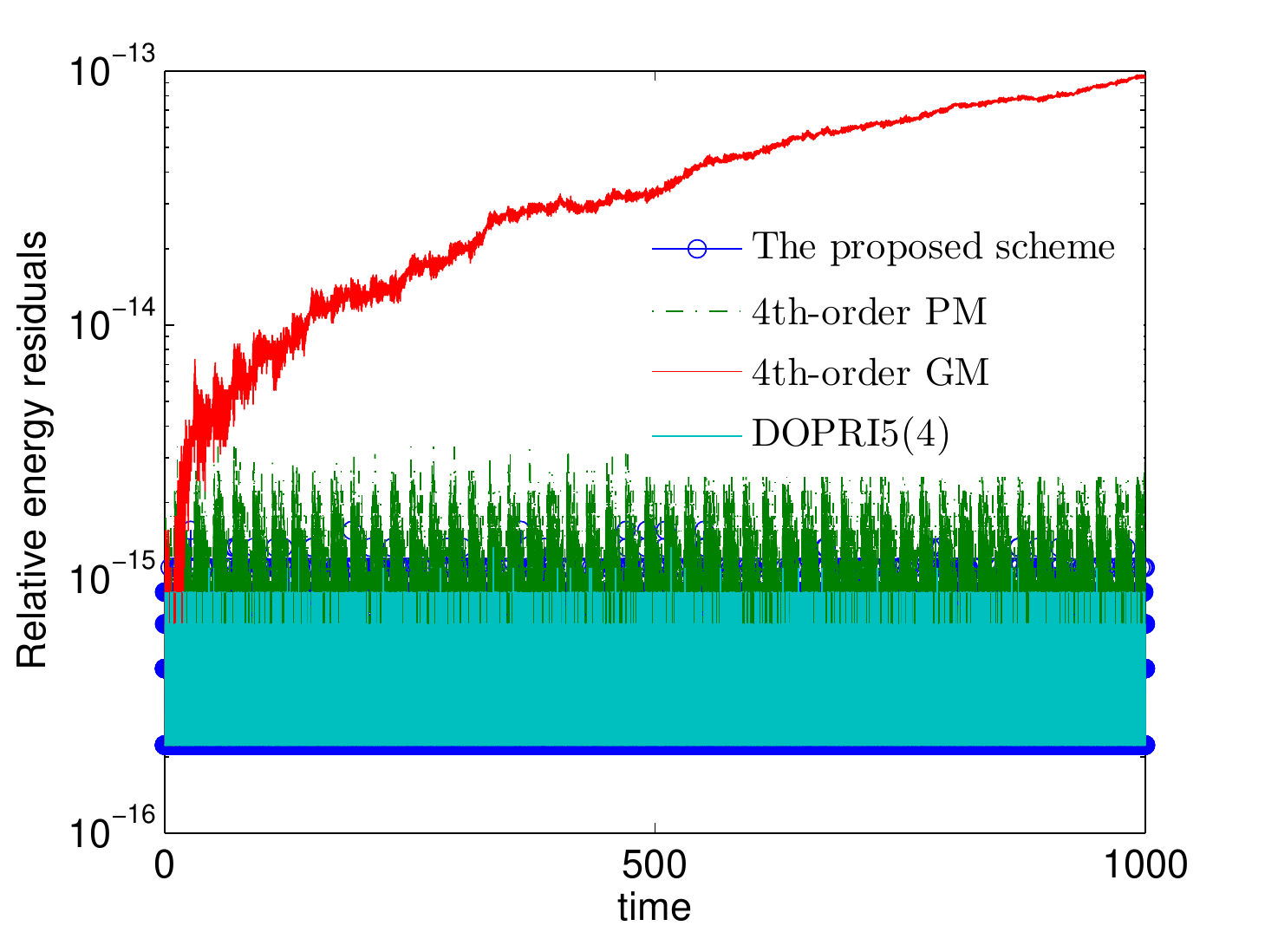}
\end{minipage}
\caption{The relative energy residuals using the different numerical schemes with time step $\tau=0.01$ and Fourier node 256 for the one dimensional Schr\"odinger equation \eqref{NLS-equation}.}\label{1d-scheme:fig:3}
\end{figure}

The $L^2$ errors and $L^{\infty}$ errors in numerical solution of $u$ in short time (i.e., $t=1$) are calculated using three different numerical schemes with various
time steps {$\tau= 0.0025, 0.00125, 0.000625$ and $0.0003125$,} and the results are displayed in Fig. \ref{1d-scheme:fig:1}. In Fig. \ref{1d-scheme:fig:2}, we show the global $L^2$ errors and $L^{\infty}$ errors of $u$ versus the CPU time using the different schemes at $t=1$ with the Fourier node $800$. From Figs. \ref{1d-scheme:fig:1} and \ref{1d-scheme:fig:2}, we can draw the following observations: (i) all methods have fourth order accuracy in time; (ii) the error provided by the 4th-order GM is smallest, and the one provided by the proposed scheme has the same order of magnitude as the one of the 4th-order PM; (iii) for a given global error, the cost of 4th-order GM is most expensive while the one of DOPRI5(4) is cheapest. {Although the proposed scheme is much cheaper than the 4th-order PM , the efficiency improvement of the proposed scheme
is not even 1 order of magnitude better than the 4th-order PM. The following numerical experiments also
appear similar results. This is due to that the 4th-order PM is also fully explicit scheme.}

To further investigate the energy-preservation
of the proposed scheme, we provide the energy errors using the different numerical schemes for the 1D Schr\"odinger equation over the time interval $t\in[0,1000]$ in Fig. \ref{1d-scheme:fig:3}, which shows that all methods can preserve the discrete quadratic invariant \eqref{IFD-energy-conservation-law} up to round-off error and the 4th-order GM admits largest energy error. {According to Theorem \ref{thm-3.1} and Ref. \cite{CHMR06}, the residual in the energy for the explicit schemes under consideration should be zero, but it remains almost constant along the time in Fig. \ref{1d-scheme:fig:3}. The following numerical experiments also appear similar situations. We thank that it is due to the round off error introduced by the numerical simulations.} In addition, the residual in energy of the 4th-order GM looks like to show linear decrease in long time computation, we suspect that it is due to solve the discrete nonlinear system by using the iterative method.

{We then test the superposition of two solitons, a slower one ahead of a faster one, such that initially they are well separated, for the 1D nonlinear Schr\"odinger equation \eqref{NLS-equation}.
The initial profile is given by \cite{HV2003}
\begin{align}
u(x,0)=\sqrt{\frac{2\alpha}{\beta}}e^{\text{i}\frac{1}{2}c_1x}\text{sech}(\sqrt{\alpha}x)+e^{\text{i}\frac{1}{2}c_2(x-\delta)}\text{sech}(\sqrt{\alpha}(x-\delta))
\end{align}
with $\alpha=\frac{1}{2},\ \beta=1,\ c_1=1,\ c_2=\frac{1}{10}$ and $\delta=25$.}  We take the space interval $\Omega=[-20,80]$ and the the superposition of two solitons are showed in Fig. \ref{1d-scheme:fig-3}.  {As illustrated in Fig. (a), the collision between two solitons seems quite elastic, and the solitary waves propagate in their original directions with the same velocities. which can be verified in Fig. (b) again. The simulated solutions are precisely consistent to existing results in Ref. \cite{HV2003}.} In Fig. \ref{1d-scheme:fig:4}, we present the residuals in the quadratic energy \eqref{IFD-energy-conservation-law} over the time interval $t\in[0,44]$ calculated
with different numerical schemes and the results, which shows that all of the schemes can preserve the quadratic energy exactly. {It is remarked that, when fixing the spatial step and enlarged the  time step (i.e., $\tau=0.004$), we find that the numerical results provided by the explicit schemes are wrong, this is because for some explicit methods some bounds on the step size could be necessary
to control high order harmonics.}

\begin{figure}[H]
\centering\begin{minipage}[t]{60mm}
\includegraphics[width=60mm]{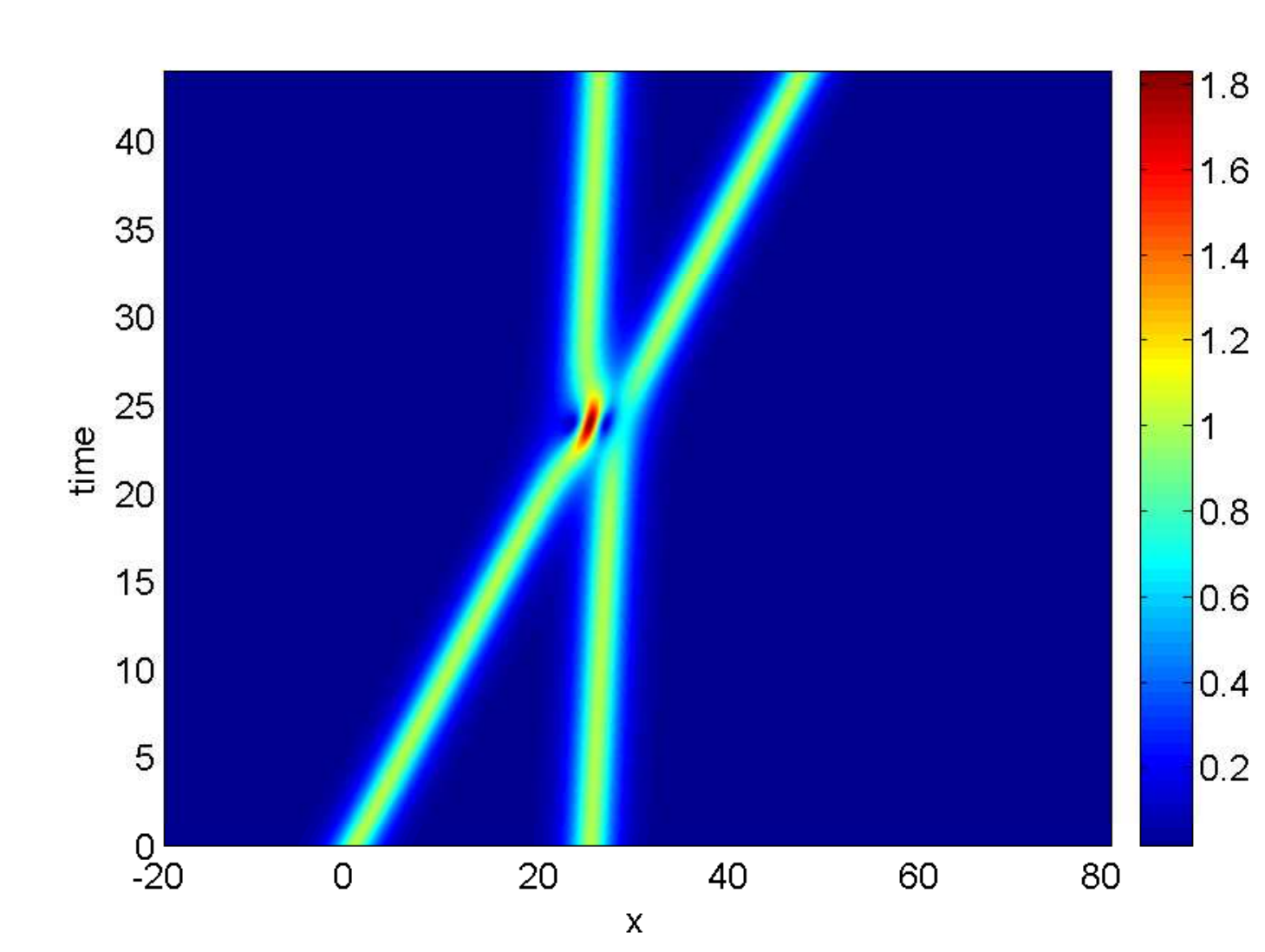}
\caption*{(a)}
\end{minipage}
\begin{minipage}[t]{60mm}
\includegraphics[width=60mm]{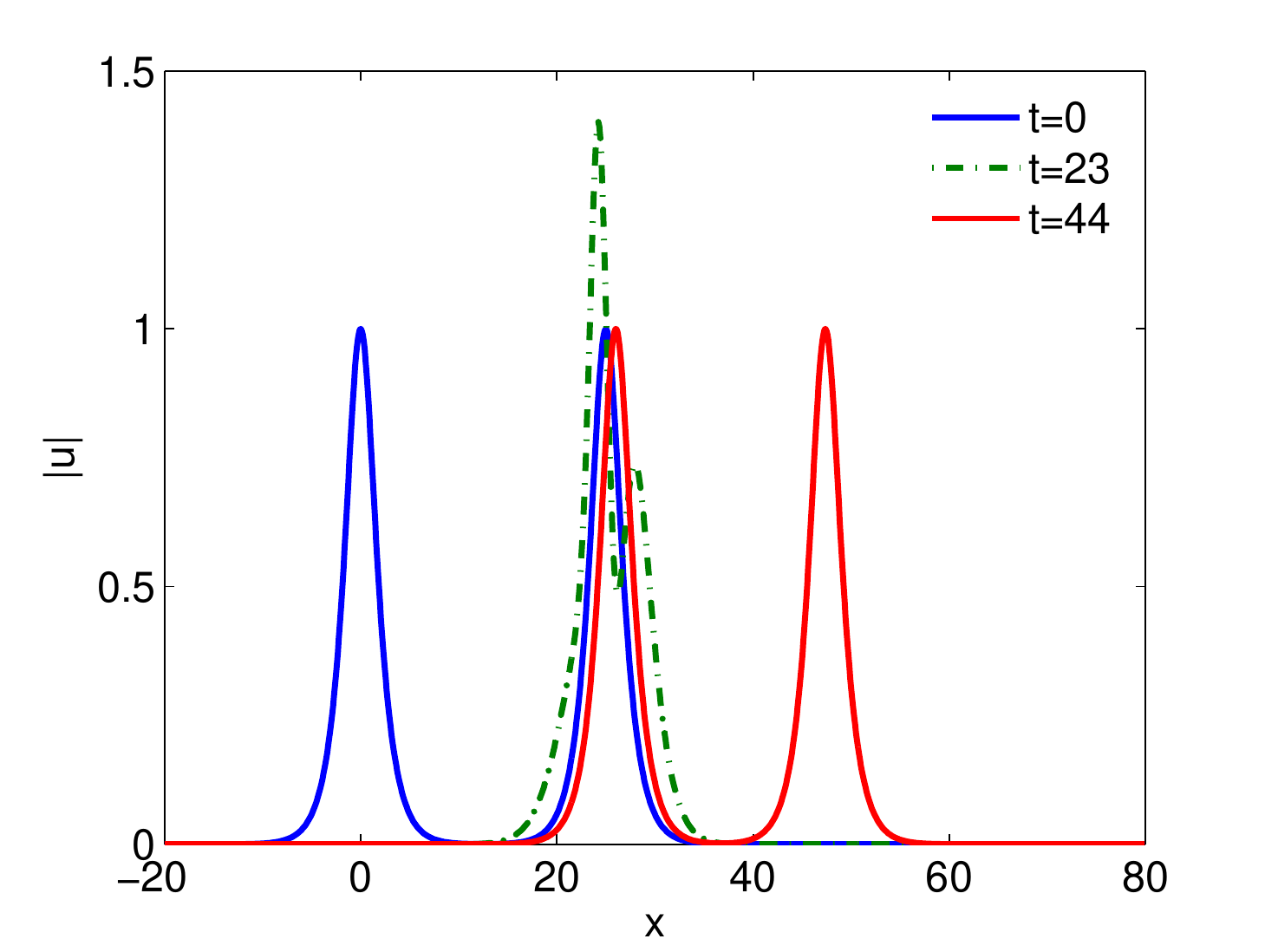}
\caption*{(b)}
\end{minipage}
\caption{ {Time evolution with the soliton collision using the proposed scheme with time step $\tau=0.001$ and Fourier node 1024: (a) the profile of $|u|$ over the time interval $t\in[0,44]$ and (b) snapshots of $|u|$ at times $t=0,\ 23$ and $44$.} }\label{1d-scheme:fig-3}
\end{figure}

\begin{figure}[H]
\centering
\begin{minipage}[t]{70mm}
\includegraphics[width=70mm]{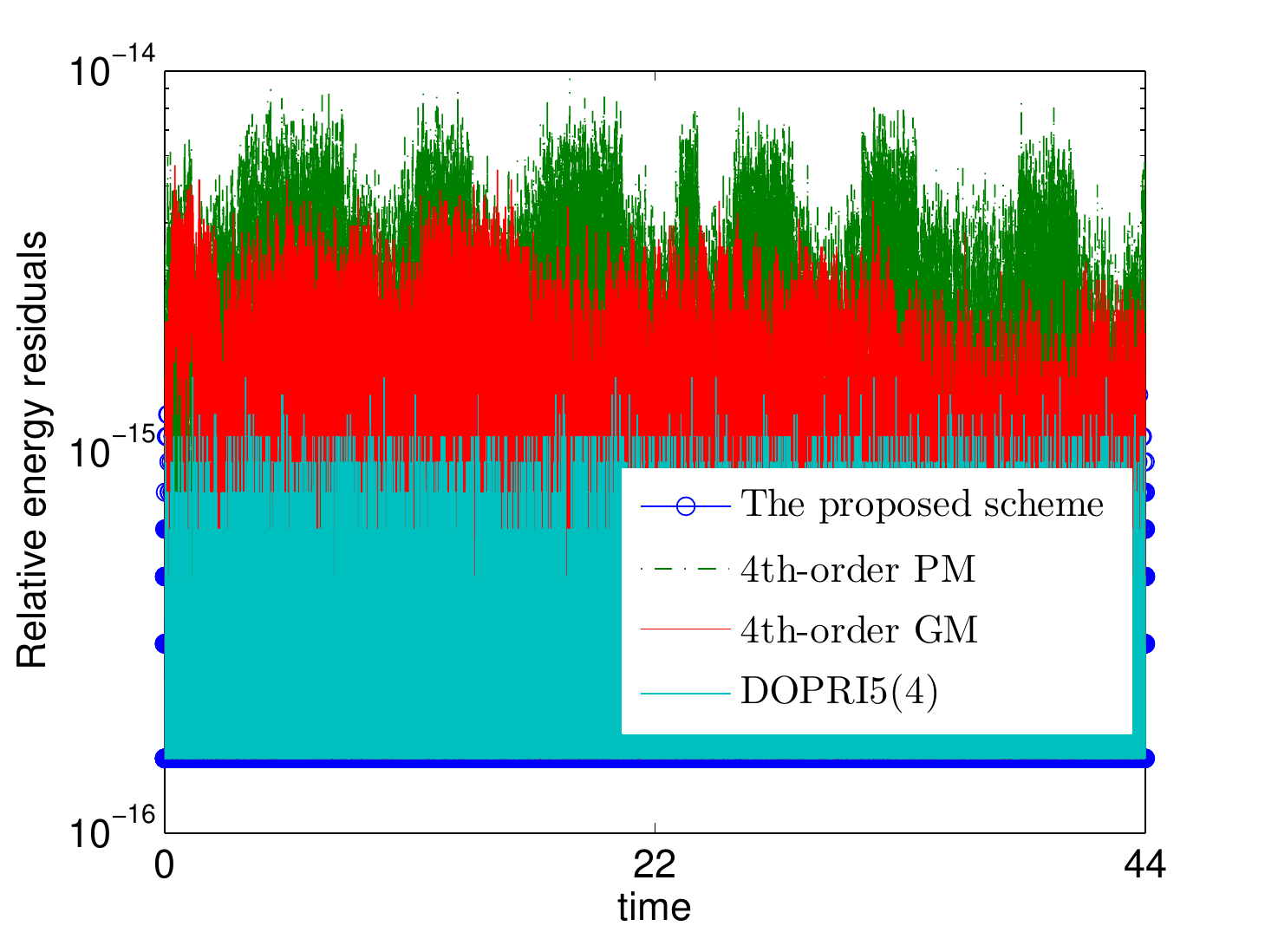}
\end{minipage}
\caption{{The relative energy residuals using the different numerical schemes for the collision of two solitons with time step $\tau=0.001$ and Fourier node 1024 for the one dimensional Schr\"odinger equation \eqref{NLS-equation}.}}\label{1d-scheme:fig:4}
\end{figure}

Finally, we test the two dimensional nonlinear Schr\"odinger equation \eqref{NLS-equation} which possesses the following analytical solution
\begin{align}\label{NLS-AS}
u(x,y,t)=A\exp(\text{i}(k_1x+k_2y-\omega t)),\ \omega=k_1^2+k_2^2-\beta|A|^2.
\end{align}
Here we use the space interval $\Omega=[0,2\pi]^2$ and choose parameters $A=1,\ k_1=k_2=1$ and $\beta=-2$.

To test the temporal discretization errors of the different numerical schemes, we fix the Fourier node $16\times 16$ such that the spatial discretization errors are negligible. The $L^2$ errors and $L^{\infty}$ errors in numerical solution of $u$ in long time (i.e., $t=50$) are calculated using three different numerical schemes with various
time steps {$\tau=0.005, 0.0025, 0.00125$ and $0.000625$}, and the results are displayed in Fig. \ref{scheme:fig:1}. In Fig. \ref{scheme:fig:2}, we show the global $L^2$ errors and $L^{\infty}$ errors of $u$ versus the CPU time using the different numerical schemes at $t=50$. It is clear to see that the observations obtained from  Figs. \ref{scheme:fig:1} and \ref{scheme:fig:2} behave similarly
as those given in Figs. \ref{1d-scheme:fig:1} and \ref{1d-scheme:fig:2}. 

To further investigate the energy-preservation
of the proposed scheme, we provide the energy errors using the different numerical schemes for the two dimensional Schr\"odinger equation over the time interval $t\in[0,1000]$ in Fig. \ref{scheme:fig:3}, which shows that all  methods can exactly preserve the quadratic invariant \eqref{IFD-energy-conservation-law}, which behaves similarly as that given in Fig. \ref{1d-scheme:fig:3}.

\begin{figure}[H]
\centering\begin{minipage}[t]{60mm}
\includegraphics[width=60mm]{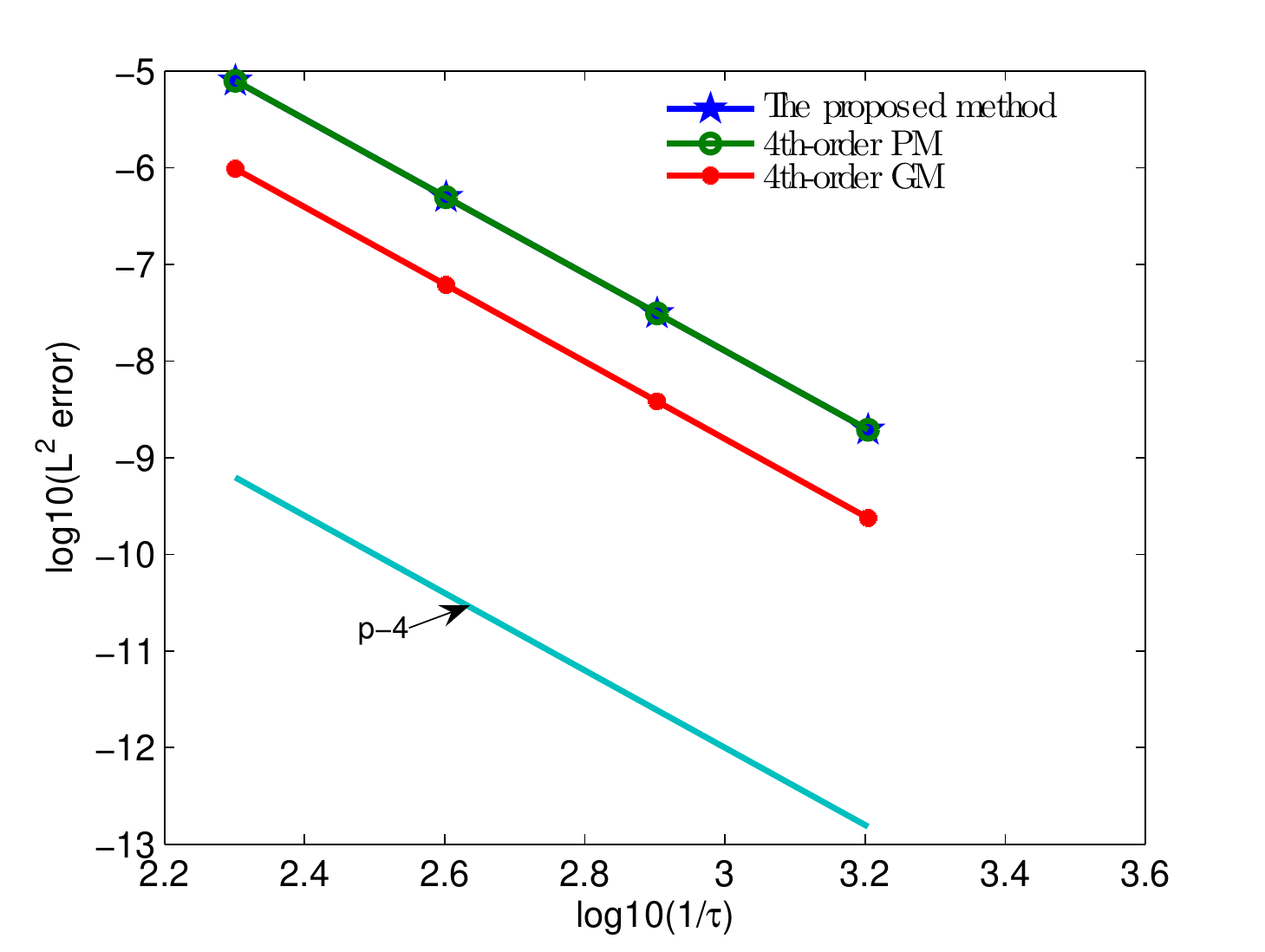}
\end{minipage}
\begin{minipage}[t]{60mm}
\includegraphics[width=60mm]{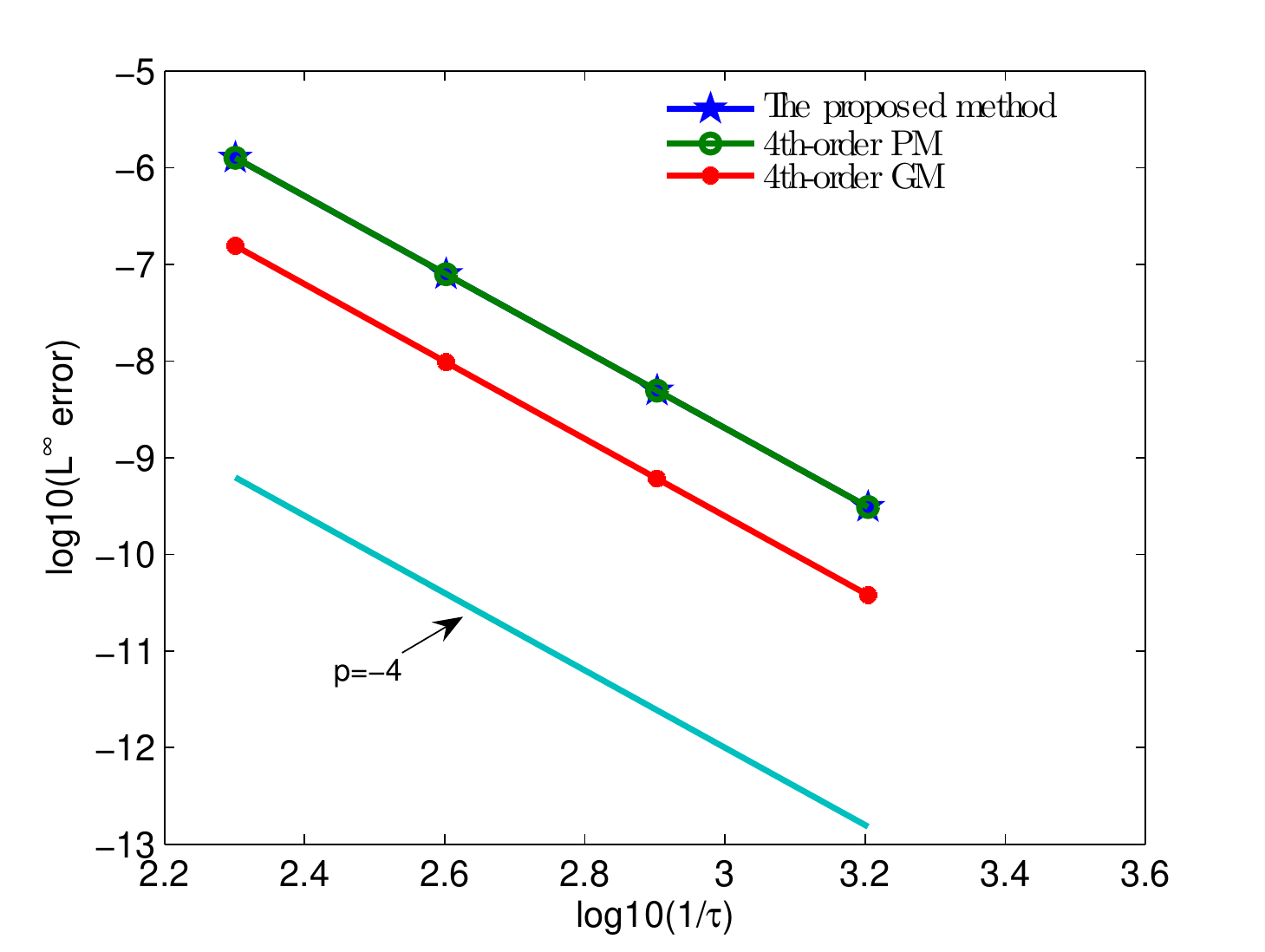}
\end{minipage}
\caption{ Time step refinement tests using the different numerical schemes for the two dimensional Schr\"odinger equation \eqref{NLS-equation}.}\label{scheme:fig:1}
\end{figure}

\begin{figure}[H]
\centering\begin{minipage}[t]{60mm}
\includegraphics[width=60mm]{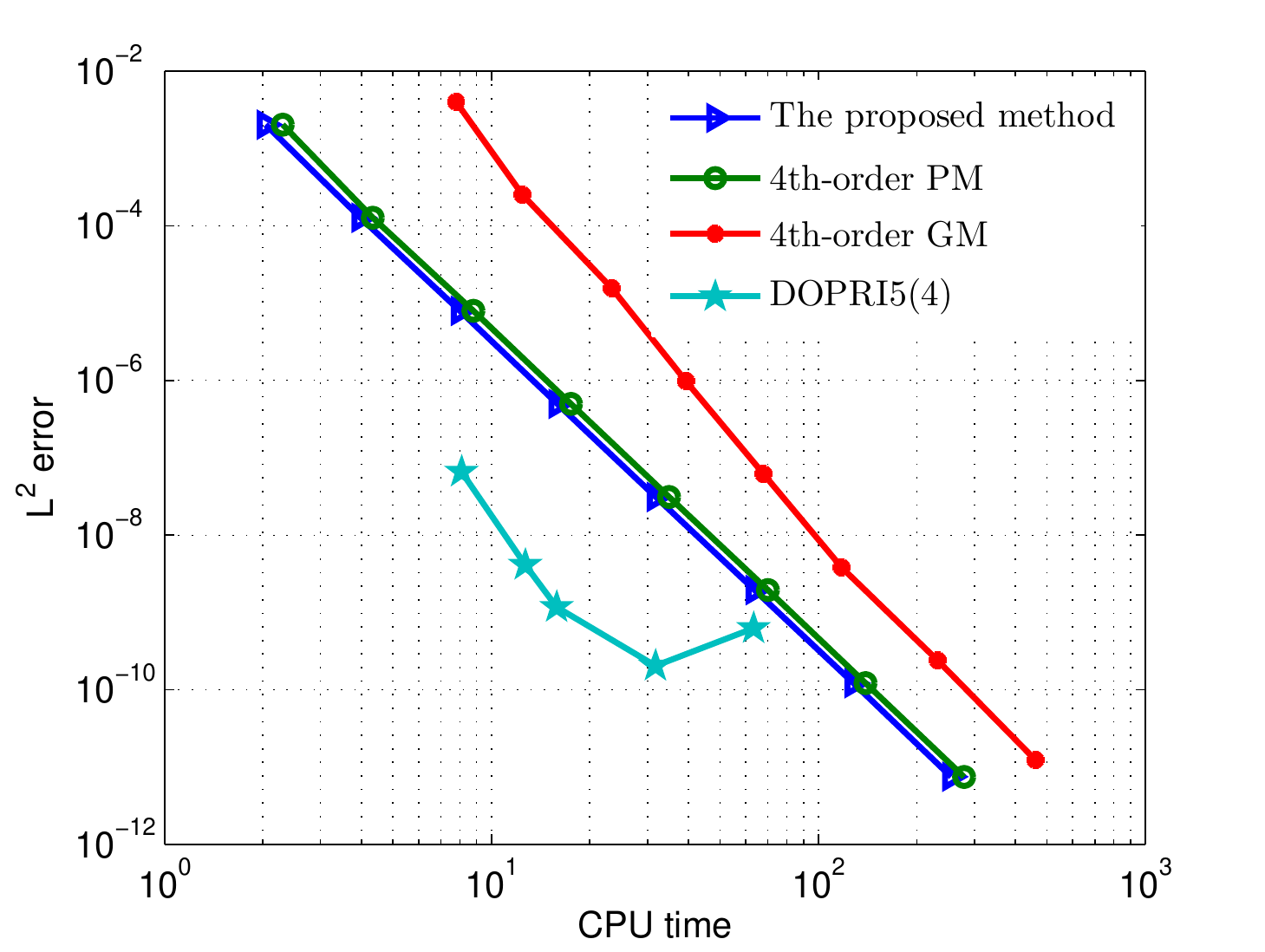}
\end{minipage}
\begin{minipage}[t]{60mm}
\includegraphics[width=60mm]{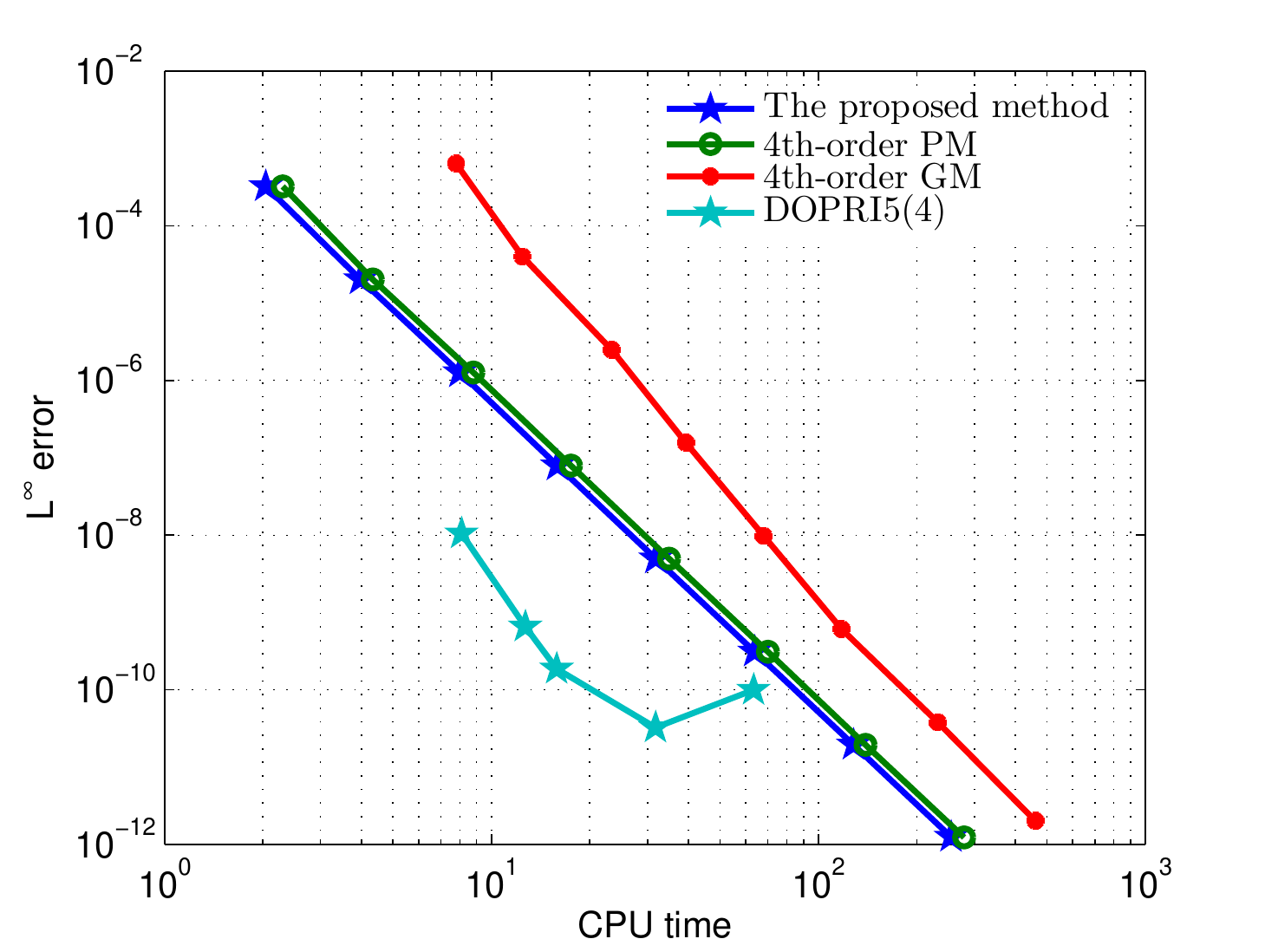}
\end{minipage}
\caption{ The numerical error versus the CPU time using the different numerical schemes for the two dimensional Schr\"odinger equation \eqref{NLS-equation}.}\label{scheme:fig:2}
\end{figure}


\begin{figure}[H]
\centering
\begin{minipage}[t]{70mm}
\includegraphics[width=70mm]{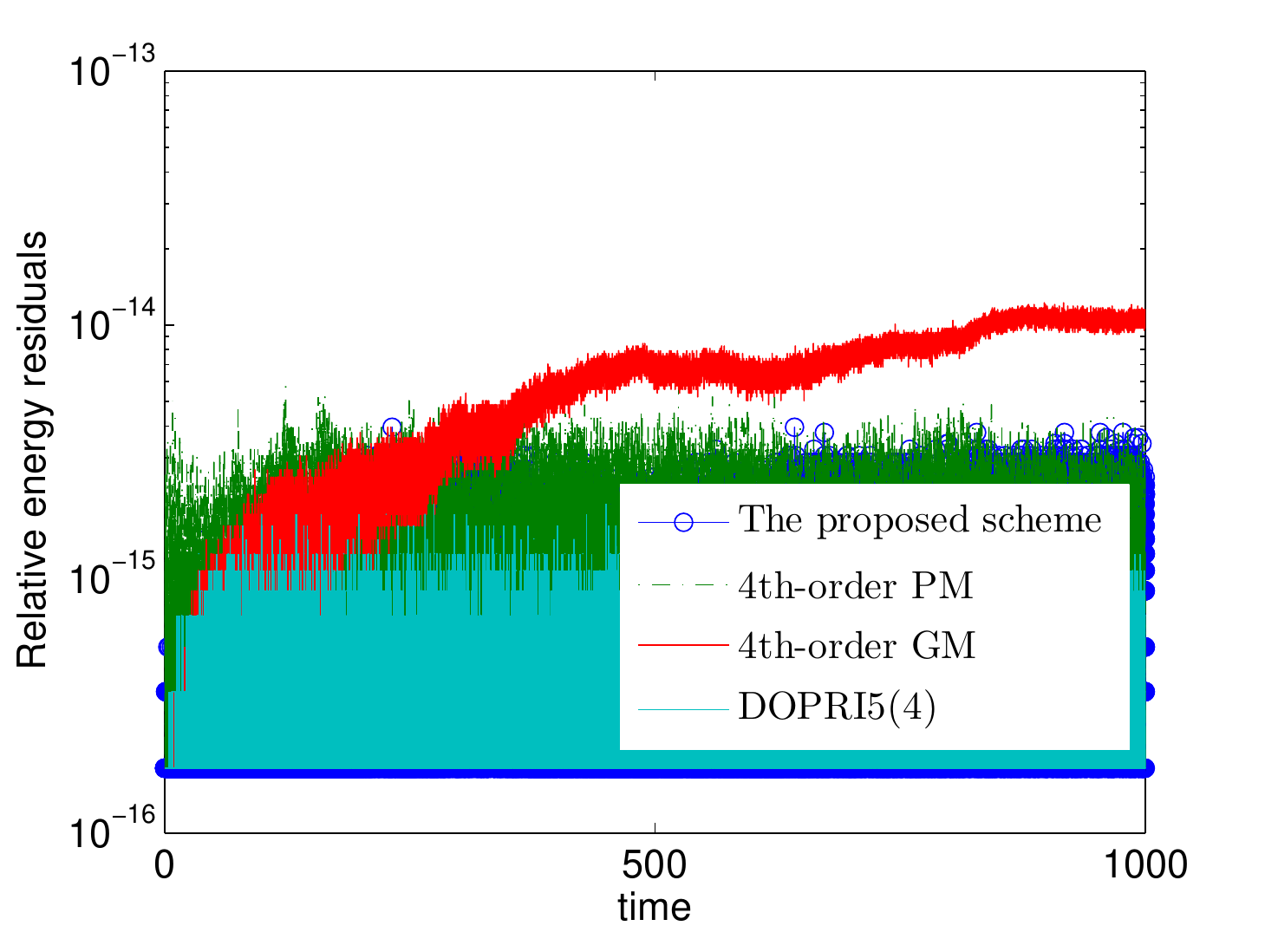}
\end{minipage}
\caption{The relative energy residuals using the different numerical schemes with time step $\tau=0.01$ and Fourier node $16\times 16$ for the two dimensional Schr\"odinger equation \eqref{NLS-equation}.}\label{scheme:fig:3}
\end{figure}

\subsection{Sine-Gordon equation} \label{sec:4.2}
In this subsection, we focus on the sine-Gordon equation given
as follows
\begin{align}\label{sine-gordon-equation}
&\partial_{tt}u-\Delta u+\sin(u)=0,
  \end{align}
where the Hamiltonian energy functional is given by
\begin{align}\label{sine-gordon-equation-Hamiltonian-energy}
\mathcal{H}=\frac{1}{2}\big(\|v\|^2+(u,-\Delta u)+2(1-\cos(u),1)\big).
\end{align}
We let $q=\sqrt{2\Big((1-\cos(u))+\frac{C_0}{|\Omega|}\Big)}$, and rewrite the Hamiltonian energy functional as
 \begin{align}\label{IEQ-energy}
\mathcal{H}=\frac{1}{2}\big(\|v\|^2+(u,-\Delta u)+{\|q\|^2}\big)-C_0.
\end{align}
According to the IEQ reformulation, we obtain the following equivalent system
%
%
 \begin{align}\label{sine-Gordon-eqaution-IEQ-equation}
\left\lbrace
  \begin{aligned}
  &\partial_tu=v,\\
  &\partial_t v=\Delta u-\frac{\sin(u)q}{\sqrt{2\Big((1-\cos(u))+\frac{C_0}{|\Omega|}\Big)}},\\
  &\partial_tq=\frac{\sin(u)\partial_t u}{\sqrt{2\Big((1-\cos(u))+\frac{C_0}{|\Omega|}\Big)}},
  \end{aligned}\right.\ \ 
  \end{align}
  with the consistent initial condition
  \begin{align}\label{sine-Gordon-eqaution-initial}
\left\lbrace
  \begin{aligned}
  &u(x,t=0)=u_0(x),\ {v(x,t=0)=\partial_tu(x,0)},\\
  &q(x,t=0)=\sqrt{2\Big((1-\cos(u_0(x)))+\frac{C_0}{|\Omega|}\Big)}.
  \end{aligned}\right.\ \ 
  \end{align}

Supposing $\Phi=\left[\begin{array}{ccc}
              u \\
              v\\
              q\\
             \end{array}\right]$, then the system \eqref{sine-Gordon-eqaution-IEQ-equation} is equivalently reformulated into the system \eqref{model-general-field} with a quadratic energy
\begin{align}\label{sG-energy-conserved-law}
\mathcal{H}=\frac{1}{2}(\Phi,\mathcal{L}\Phi),\ \mathcal{L}=\left[\begin{array}{ccc}
              -\Delta& \ &\  \\
              \ & 1&\ \\
              \ &\ &1\\
             \end{array}
\right],
\end{align}
and a modified structure matrix
\begin{align*}
\mathcal{G}(\Phi) = \left[\begin{array}{ccc}
0 &1&0\\
-1&0&\frac{-\sin(u)}{\sqrt{2\Big((1-\cos(u))+\frac{C_0}{|\Omega|}\Big)}}\\
0&\frac{\sin(u)}{\sqrt{2\Big((1-\cos(u))+\frac{C_0}{|\Omega|}\Big)}}&0\\
\end{array}
\right].
\end{align*}

Applying \textbf{Scheme \ref{scheme2}} to system \eqref{sine-Gordon-eqaution-IEQ-equation}, we have
\begin{shm}\label{scheme-sG-equation} For given $(u^n,v^n,q^n)$, $(u^{n+1}, v^{n+1},q^{n+1})$ is calculated by the following two steps:
\begin{enumerate}
		\item {Explicit RK: we compute $(\widetilde{u}^{n+1},\widetilde{v}^{n+1},\widetilde{q}^{n+1})$ by using the explicit RK4 method (see remark \ref{rmk:3.5}) to the system \eqref{sine-Gordon-eqaution-IEQ-equation}.}
\item Projection: we update $(u^{n+1},v^{n+1},q^{n+1})$ via
\begin{align}\label{sG-explict-projection}
&u^{n+1}=\tilde{u}^{n+1}+\lambda_n\Big(-\Delta\tilde{u}^{n+1}\Big),\\
  &v^{n+1}=\tilde{v}^{n+1}+\lambda_n\tilde{v}^{n+1},\\
  &q^{n+1}=\tilde{q}^{n+1}+\lambda_n\tilde{q}^{n+1},
\end{align}	
where $\lambda_n$ is given by \eqref{lambda-value}.
\end{enumerate}
\end{shm}

{ Let $U_{j}^n$, $V_{j}^n$ and $Q_{j}^n$ be the numerical approximations of  $u(x_j,t_n)$, $v(x_j,t_n)$ and $q(x_j,t_n)$ for $n=0,1,2\cdots,M$ and $j=0,1,2,\cdots,N$, respectively, and denote $U^n:=[U_0^n,U_1^n,\cdots,U_{N-1}^n]^T$, $V^n:=[V_0^n,V_1^n,\cdots,V_{N-1}^n]^T$ and $Q^n:=[Q_0^n,Q_1^n,\cdots,Q_{N-1}^n]^T$ be the numerical solution vectors. According to Theorem \ref{thm-DECL2}, the {\bf Scheme 4.2} preserves the following discrete energy conservation law 
\begin{align}\label{sg-IFD-energy-conservation-law}
H^{n}=\frac{1}{2}\big(\|V^n\|_h^2+\langle U^n,-\Delta_h U^n\rangle_h+\|Q^n\|_h^2\big)-C_0,
\end{align}
where $\Delta_h$ represents the Fourier pseudo-spectral matrix.}
\begin{rmk} \label{sg-rmk-4.2} {We should note that the energy \eqref{sine-gordon-equation-Hamiltonian-energy} is non-quadratic, thus, according to Remark \ref{rmk2.1}, the {\bf Scheme 4.1} can not preserve the following discrete energy in the original variables
\begin{align}\label{sG-equation-Hamiltonian-energy-error}
H_h^{n}=\frac{1}{2}\big(\|V^n\|_h^2+\langle U^n,-\Delta_h U^n\rangle_h+2\langle 1-\cos(U^n),{\bf1}\rangle_h\big),
\end{align}
where ${\bf 1}=[1,1,\cdots,1]^T\in\mathbb{R}^N$.}
\end{rmk}

We repeat the time step refinement test first and choose the parameter $C_0=1$. The one dimensional sine-Gordon equation \eqref{sine-gordon-equation} admits the analytical solution
\begin{align*}
u(x,t)=4\arctan(t\text{sech}(x)).
\end{align*}
We set the space interval $\Omega=[-50,50]$ with a periodic boundary. The $L^2$ errors and $L^{\infty}$ errors of $u$ { at time $t=10$} using the different numerical schemes, Fourier node 1024 and various time steps { $\tau=0.01, 0.005, 0.0025,$ and $0.00125$ }are showed in Fig. \ref{1d-sG-scheme-error}.  Moreover, we plot the global $L^2$ errors and $L^{\infty}$ errors of $u$ versus the the CPU time {at time $t=10$} using the different schemes with Fourier node 1024 in Fig. \ref{1d-sG-scheme-CPU}. Here we observe similar results, i.e., all methods have fourth order accuracy in time errors , the error provided by the 4th-order GM is smallest, and our scheme has the same order of magnitude as the one of the 4th-order PM. Analogously, for a given global error, DOPRI5(4) is computationally cheapest and the 4th-order GM is most expensive.

In Fig. \ref{scheme:fig:6}, we display the errors in the discrete  invariants over the time interval $t\in[0,1000]$, which shows that the DOPRI5(4), the 4th-order GM, 4th-order PM and the proposed scheme can preserve  the discrete modified energy \eqref{sg-IFD-energy-conservation-law}, and our scheme can only preserve the discrete Hamiltonian energy \eqref{sG-equation-Hamiltonian-energy-error} approximately, which is consistent with Remark \ref{sg-rmk-4.2}.



\begin{figure}[H]
\centering\begin{minipage}[t]{60mm}
\includegraphics[width=60mm]{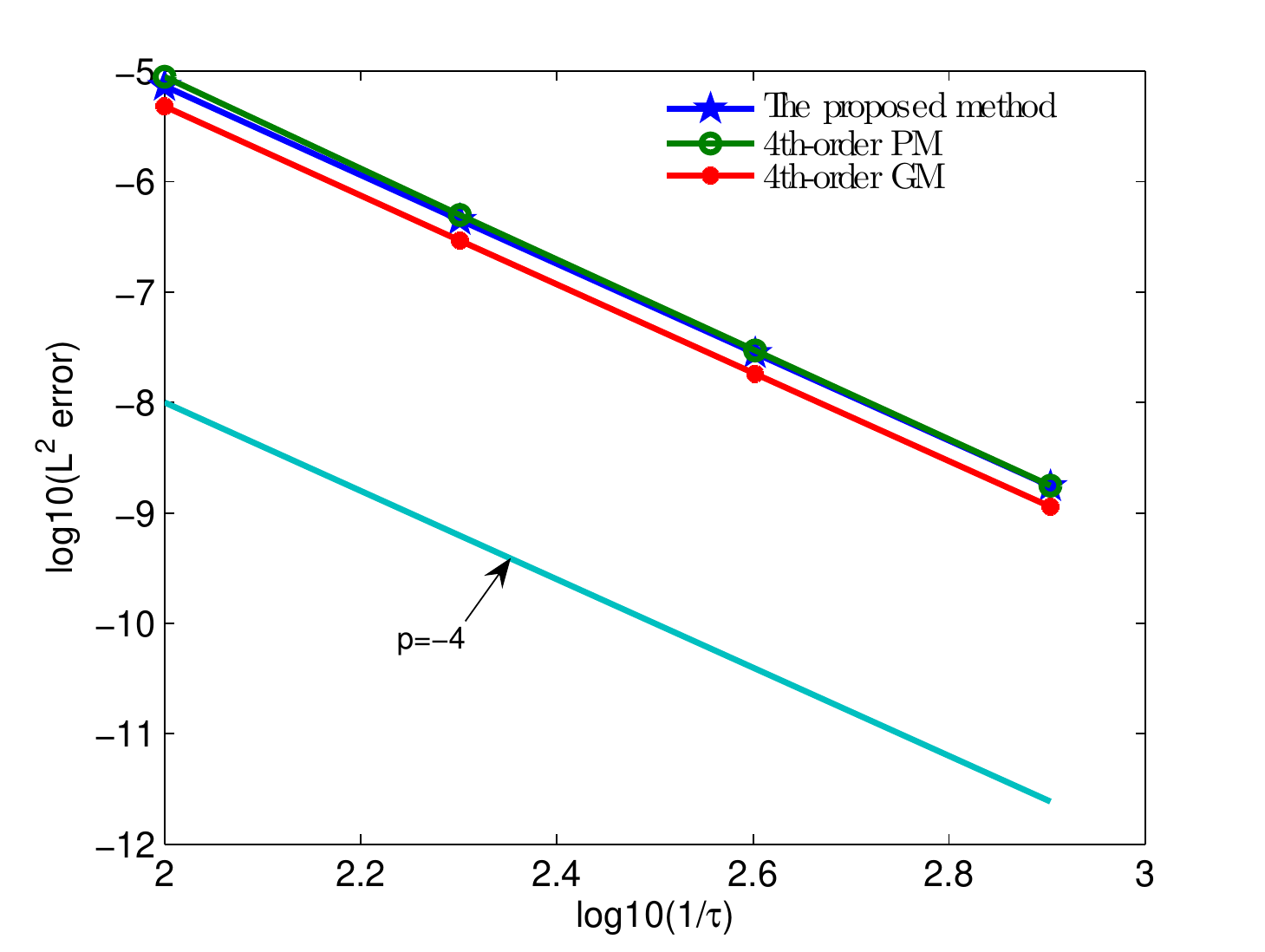}
\end{minipage}
\begin{minipage}[t]{60mm}
\includegraphics[width=60mm]{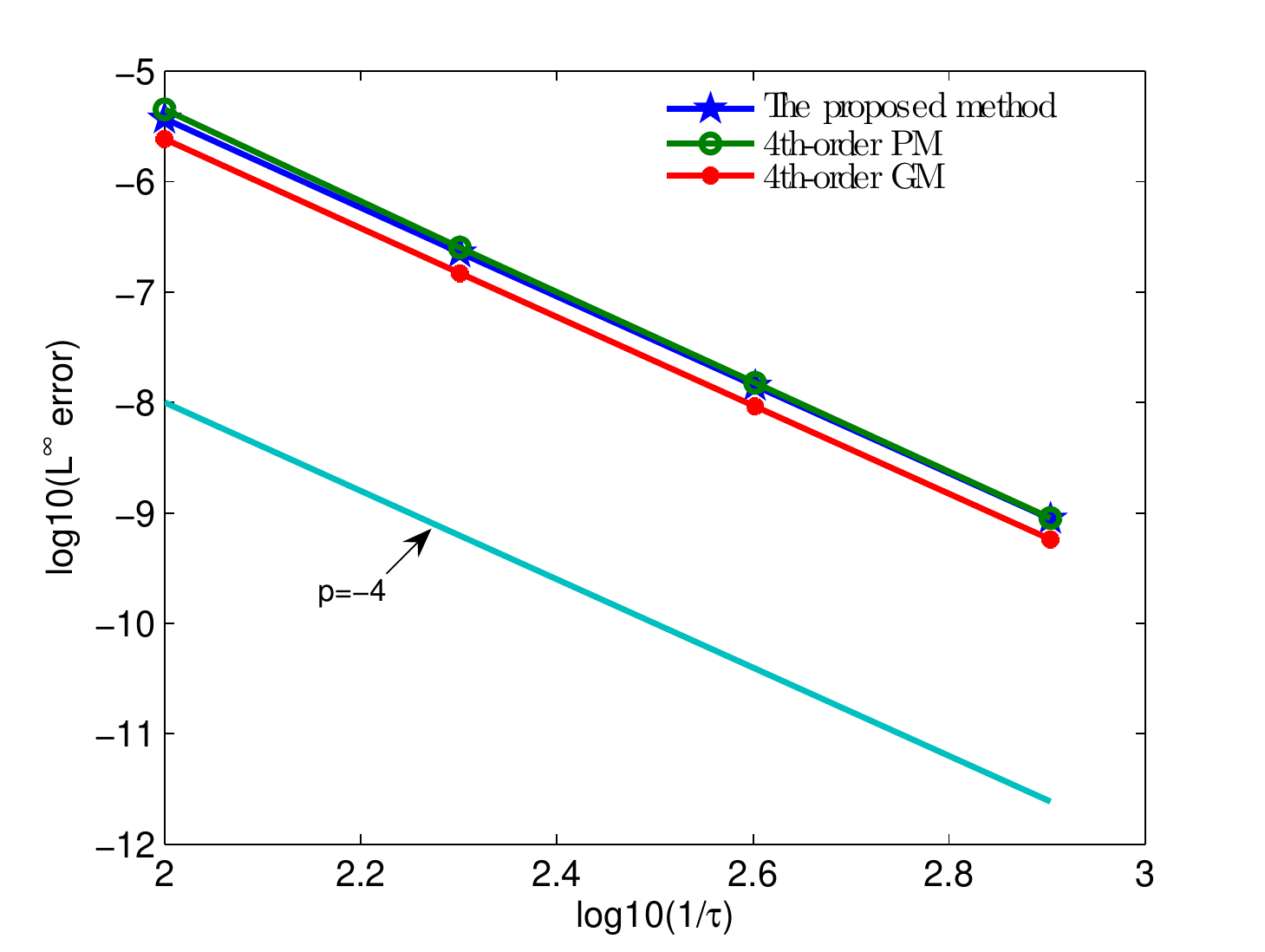}
\end{minipage}
\caption{ Time step refinement tests using the different numerical schemes for the one dimensional
sine-Gordon equation \eqref{sine-gordon-equation}.}\label{1d-sG-scheme-error}
\end{figure}

\begin{figure}[H]
\centering\begin{minipage}[t]{60mm}
\includegraphics[width=60mm]{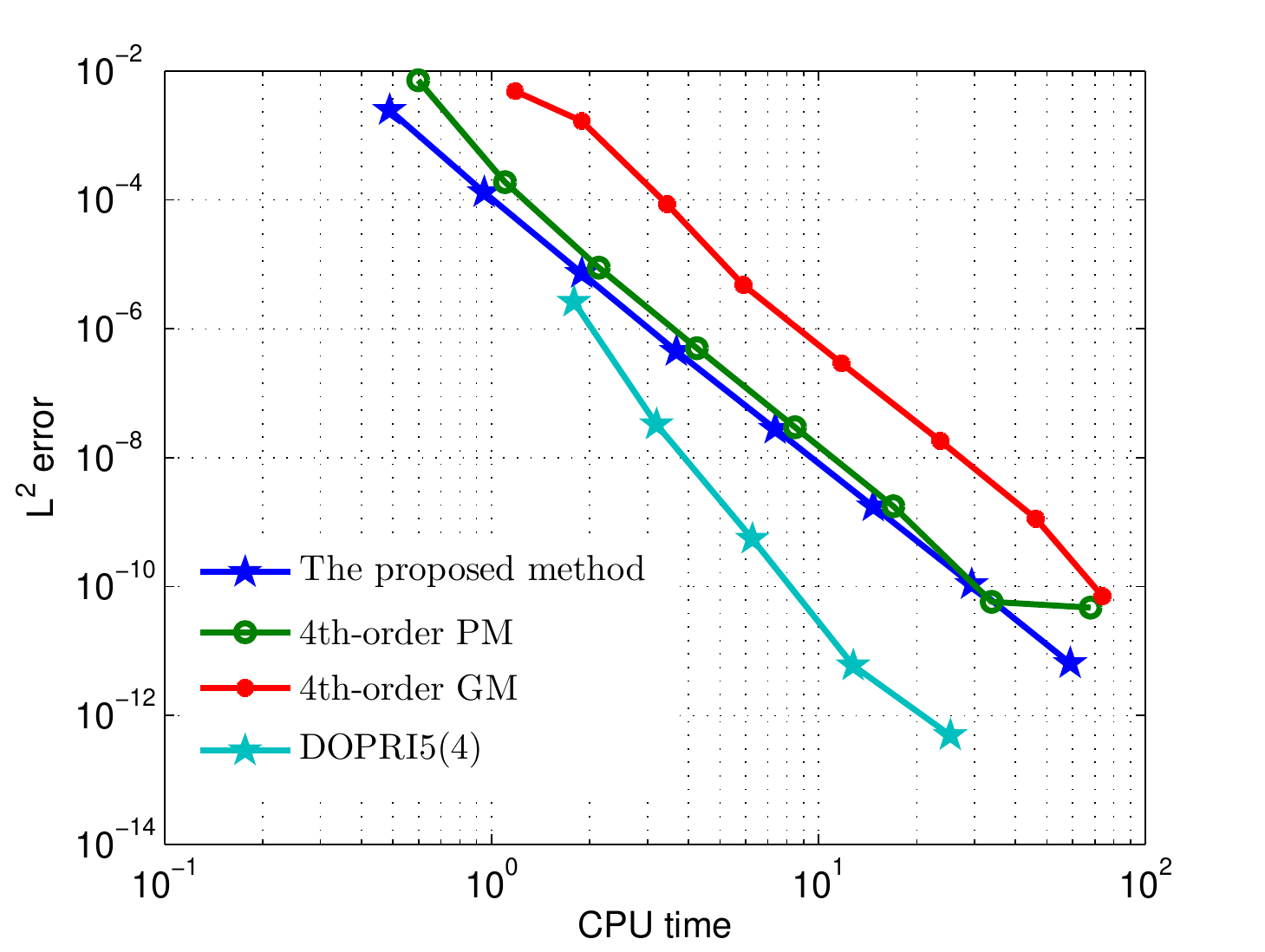}
\end{minipage}
\begin{minipage}[t]{60mm}
\includegraphics[width=60mm]{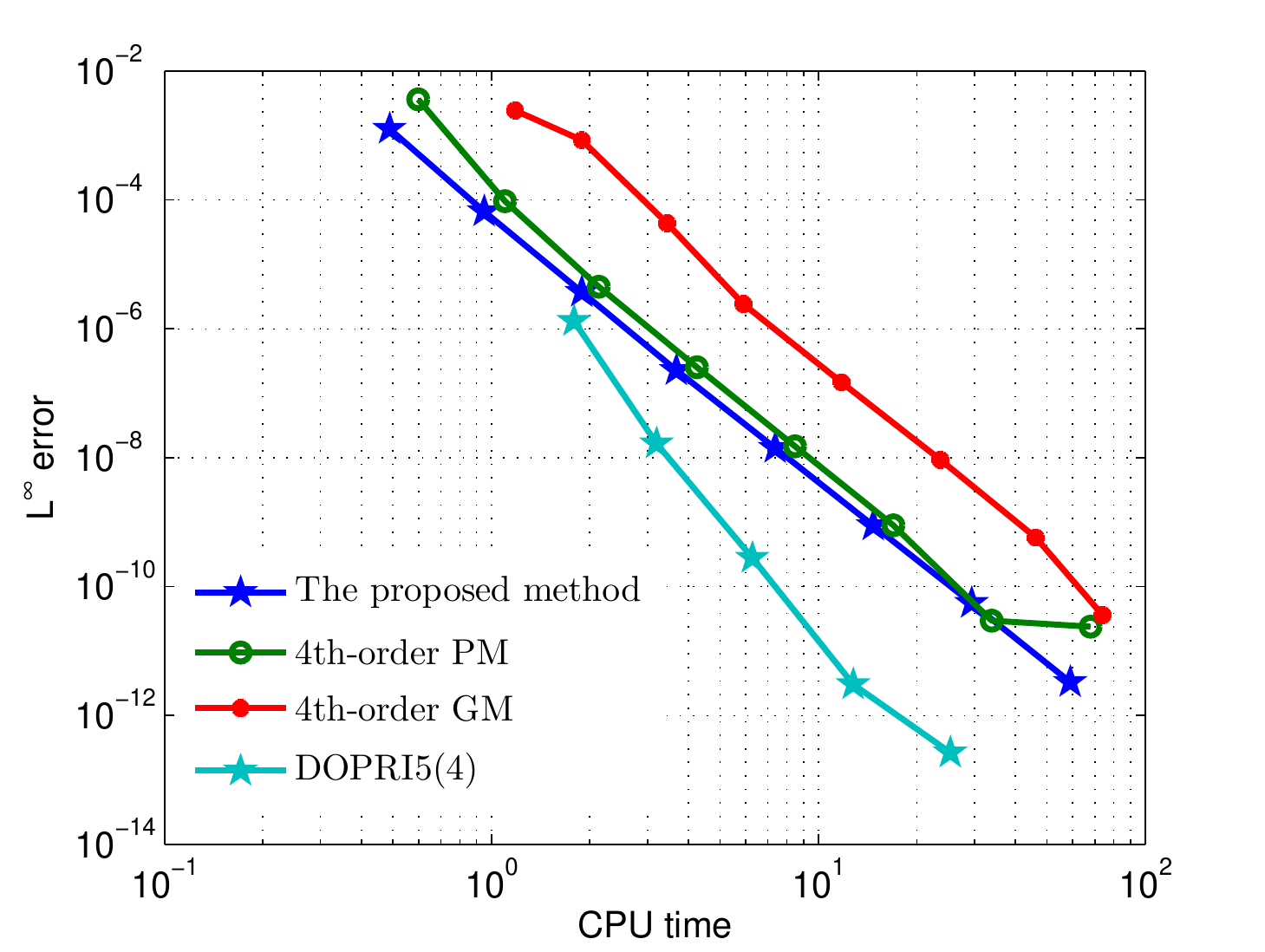}
\end{minipage}
\caption{The numerical error versus the CPU time using the different numerical schemes for
one dimensional sine-Gordon equation \eqref{sine-gordon-equation}.}\label{1d-sG-scheme-CPU}
\end{figure}

\begin{figure}[H]
\centering
\begin{minipage}[t]{70mm}
\includegraphics[width=70mm]{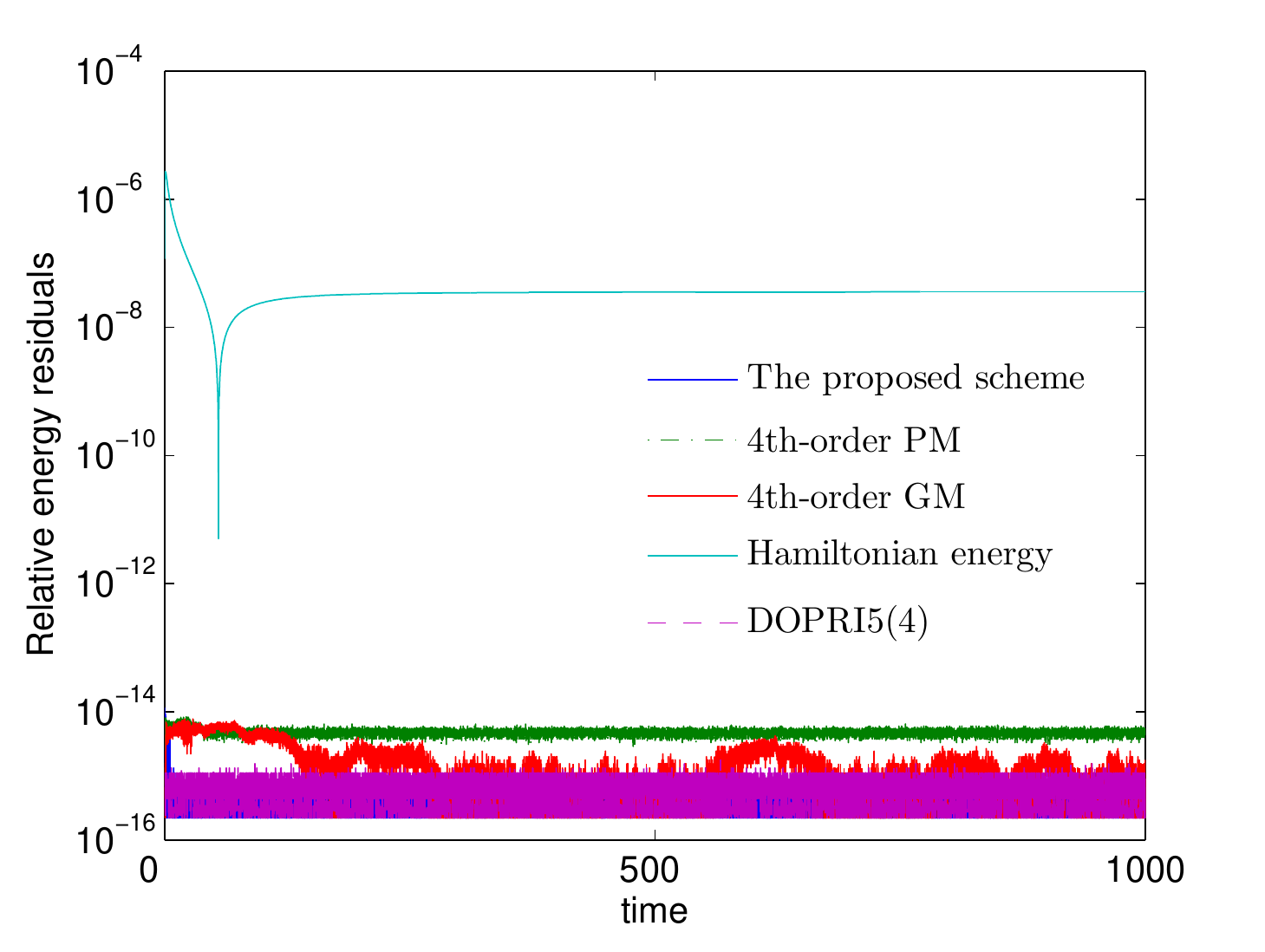}
\end{minipage}
\caption{The relative energy residuals including the discrete quadratic energy \eqref{sg-IFD-energy-conservation-law} using the different numerical schemes and the discrete Hamiltonian energy \eqref{sG-equation-Hamiltonian-energy-error} provided by the proposed scheme with the time step $\tau=0.02$ and Fourier node 256 for
one dimensional sine-Gordon equation \eqref{sine-gordon-equation}.}\label{scheme:fig:6}
\end{figure}

Next, we apply the proposed scheme to simulate the collision of four ring solitons for the two dimensional sine-Gordon equation with initial
conditions given as follows \cite{SKV10} 
\begin{align*}
&u(x,y,0)=4\tan^{-1}\left[\exp\left(\frac{4-\sqrt{(x+3)^{2}+(y+7)^{2}}}{0.436}\right)\right],\\
& u_t(x,y,0)=\frac{4.13}{\cosh\left(\frac{4-\sqrt{(x+3)^{2}+(y+7)^{2}}}{0.436}\right)},\ (x,y)\in\Omega.
\end{align*}
We choose the space interval $\Omega=[-30,10]^2$ and a periodic boundary condition.
The collision precisely among four expanding circular ring solitons are summarized in Fig. \ref{scheme:fig:7}, showing a strong agreement with the existing results presented in Refs. \cite{CJWS19jcp,SKV10}. Here, one should notice that the numerical solutions includes the extension across $x=-10$ and $y=-10$ by symmetry properties of the problem. The errors including the quadratic energy \eqref{sg-IFD-energy-conservation-law} calculated
with different schemes and the discrete Hamiltonian energy \eqref{sG-equation-Hamiltonian-energy-error} provided by the proposed scheme over the time interval $t\in[0,100]$ are shown in Fig. \ref{scheme:fig:8}, which behaves similarly as that given in Fig. \ref{scheme:fig:6}.

\begin{figure}[H]
\centering
\begin{minipage}[t]{60mm}
\includegraphics[width=60mm]{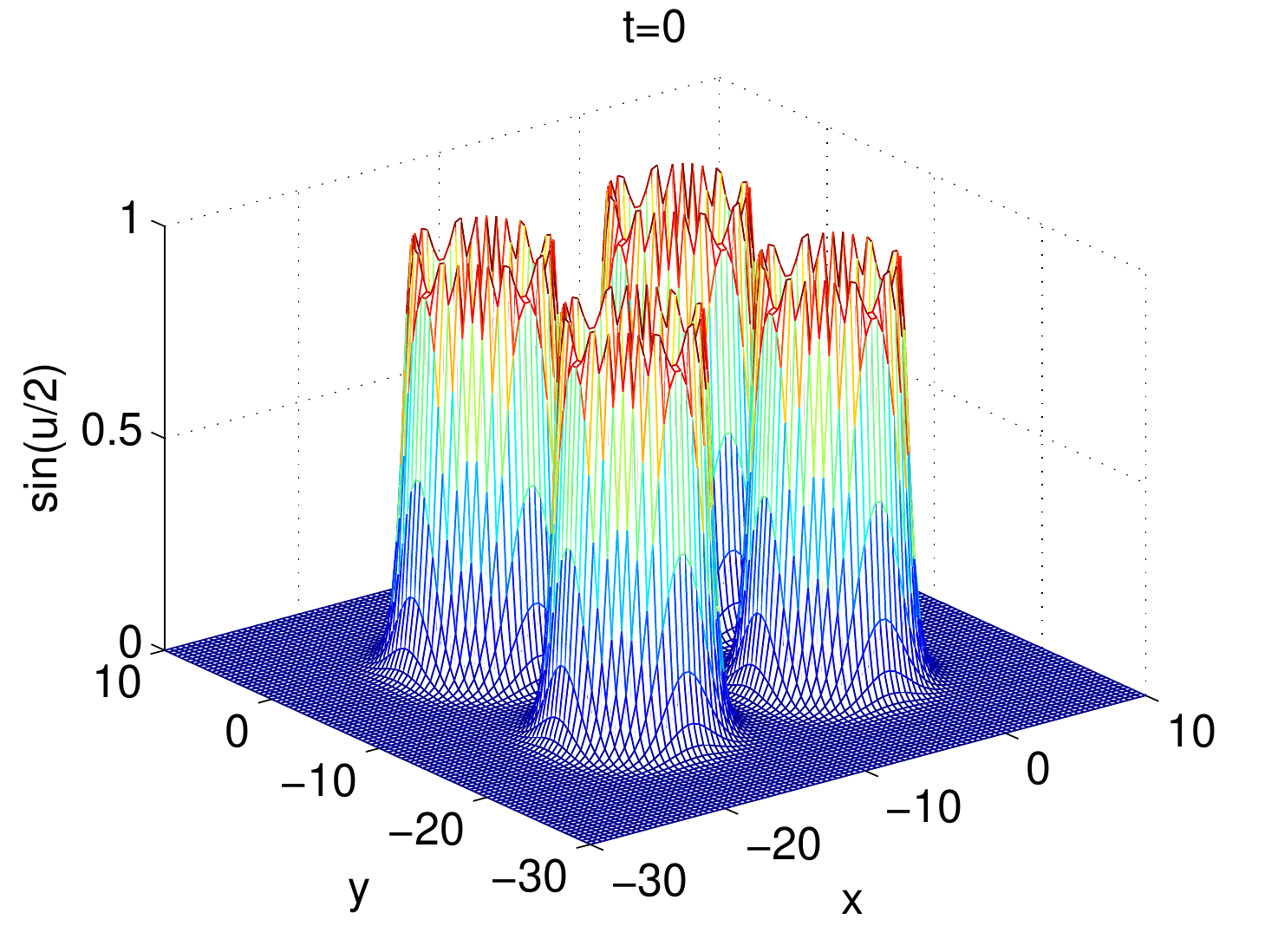}
\end{minipage}
\begin{minipage}[t]{60mm}
\includegraphics[width=60mm]{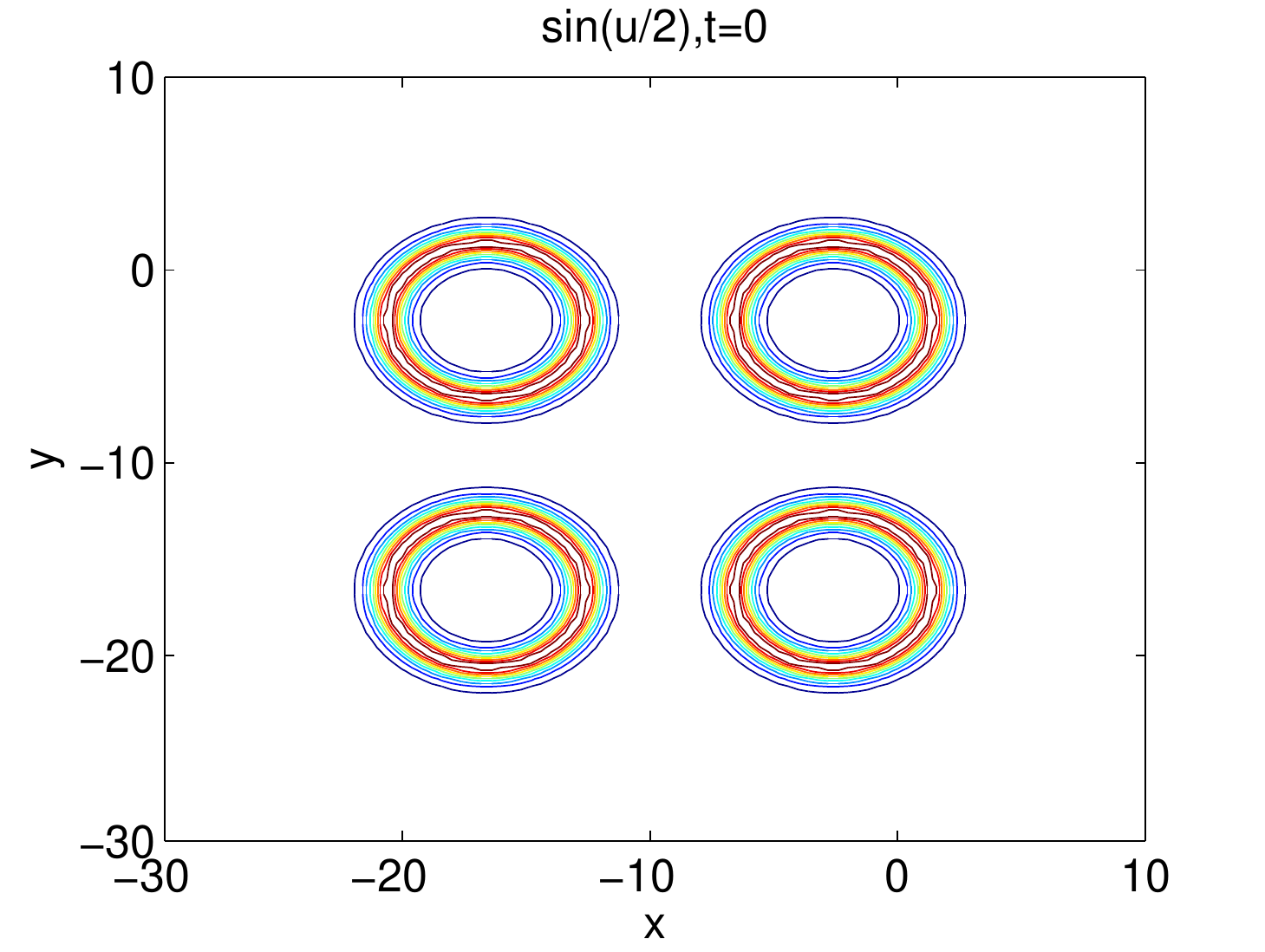}
\end{minipage}
\centering
\begin{minipage}[t]{60mm}
\includegraphics[width=60mm]{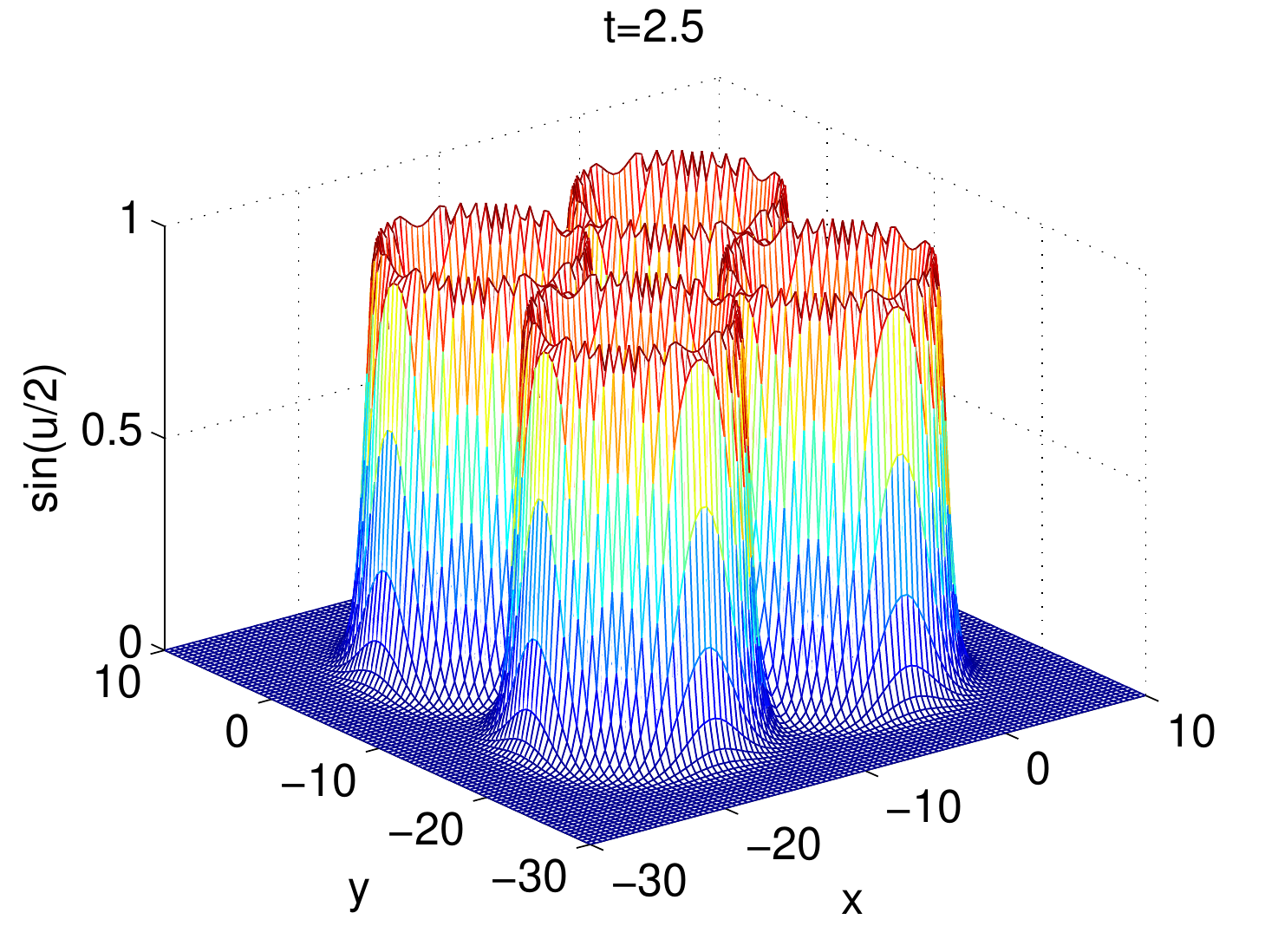}
\end{minipage}
\begin{minipage}[t]{60mm}
\includegraphics[width=60mm]{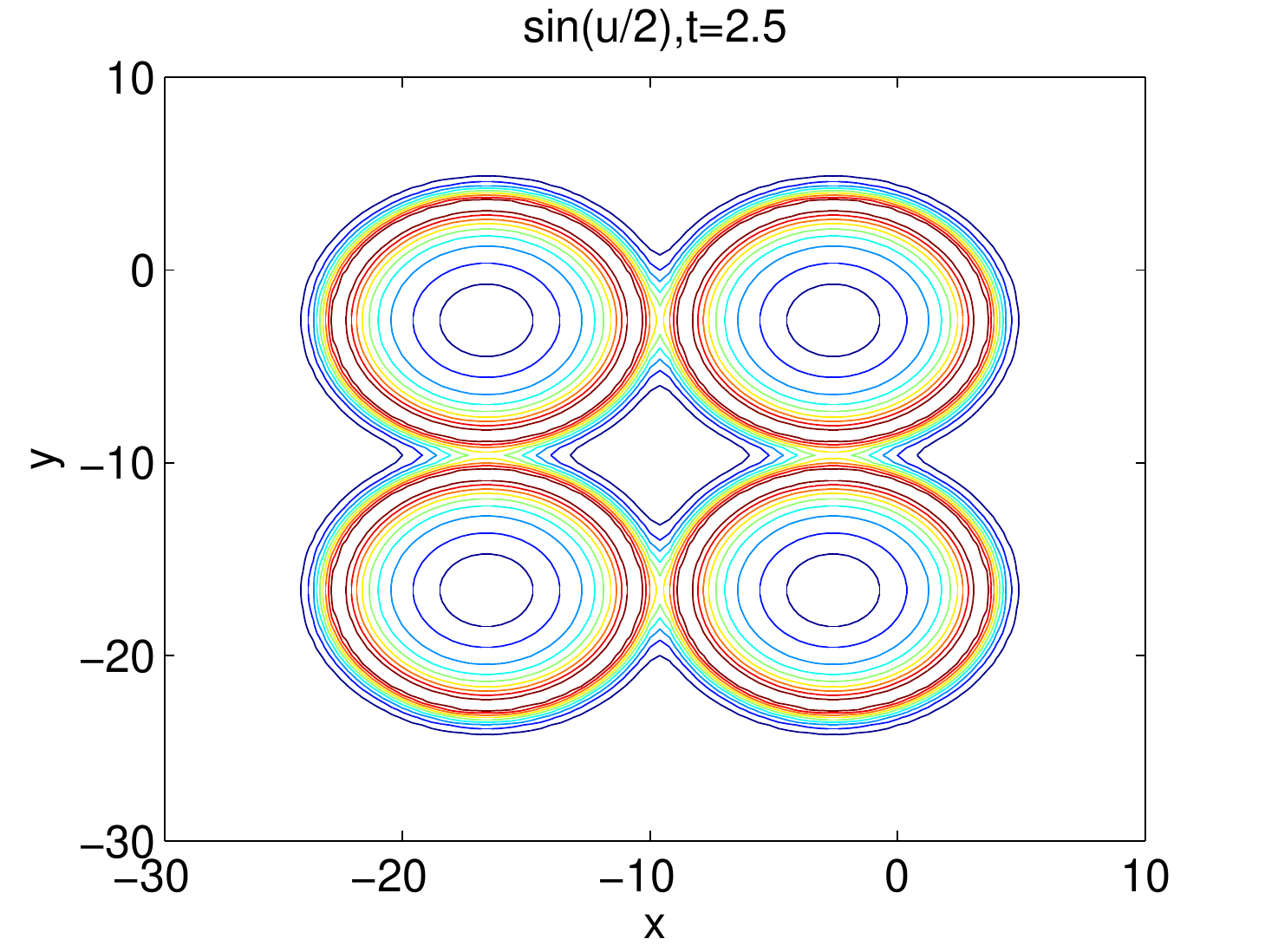}
\end{minipage}
\centering
\begin{minipage}[t]{60mm}
\includegraphics[width=60mm]{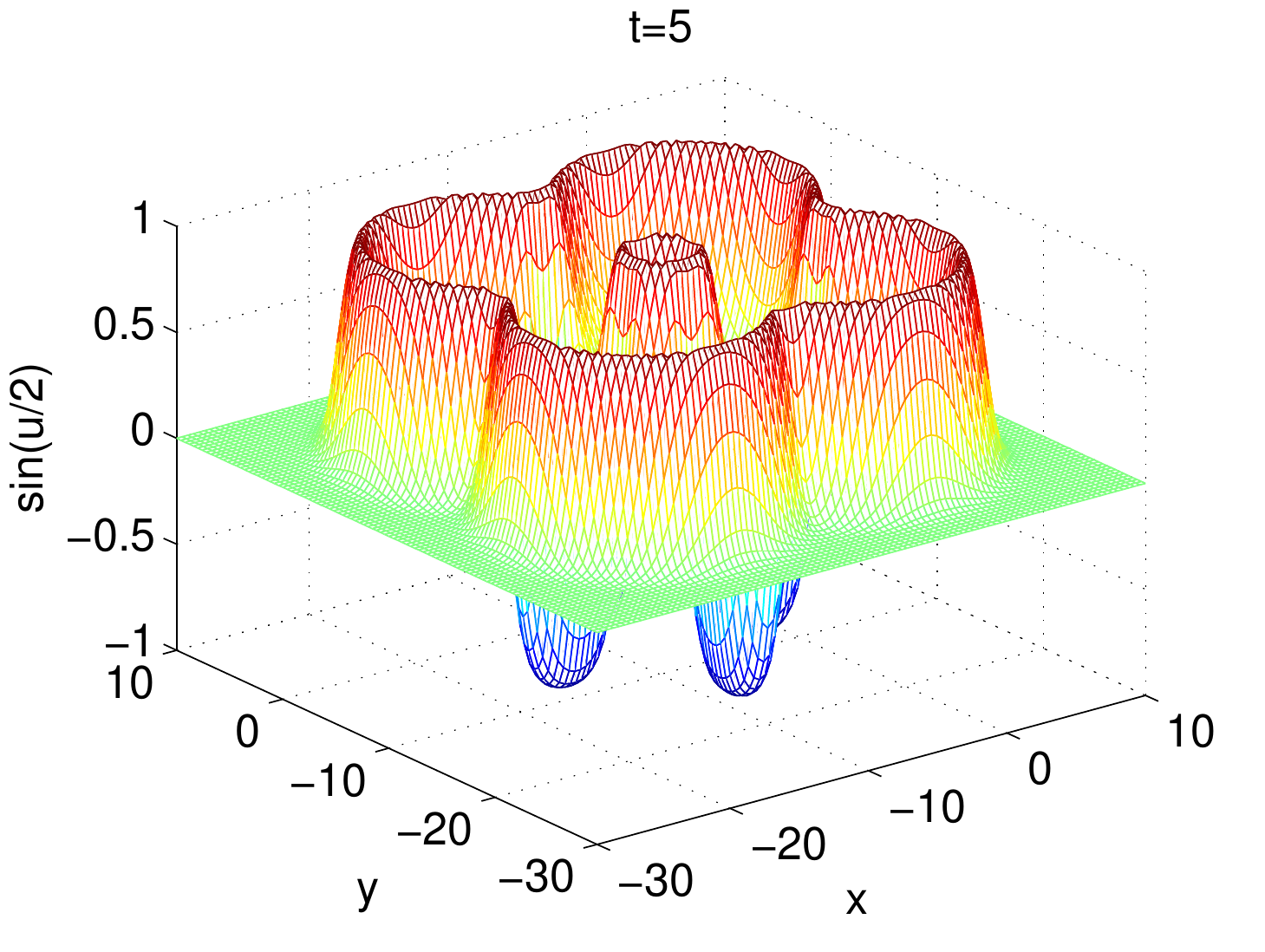}
\end{minipage}
\begin{minipage}[t]{60mm}
\includegraphics[width=60mm]{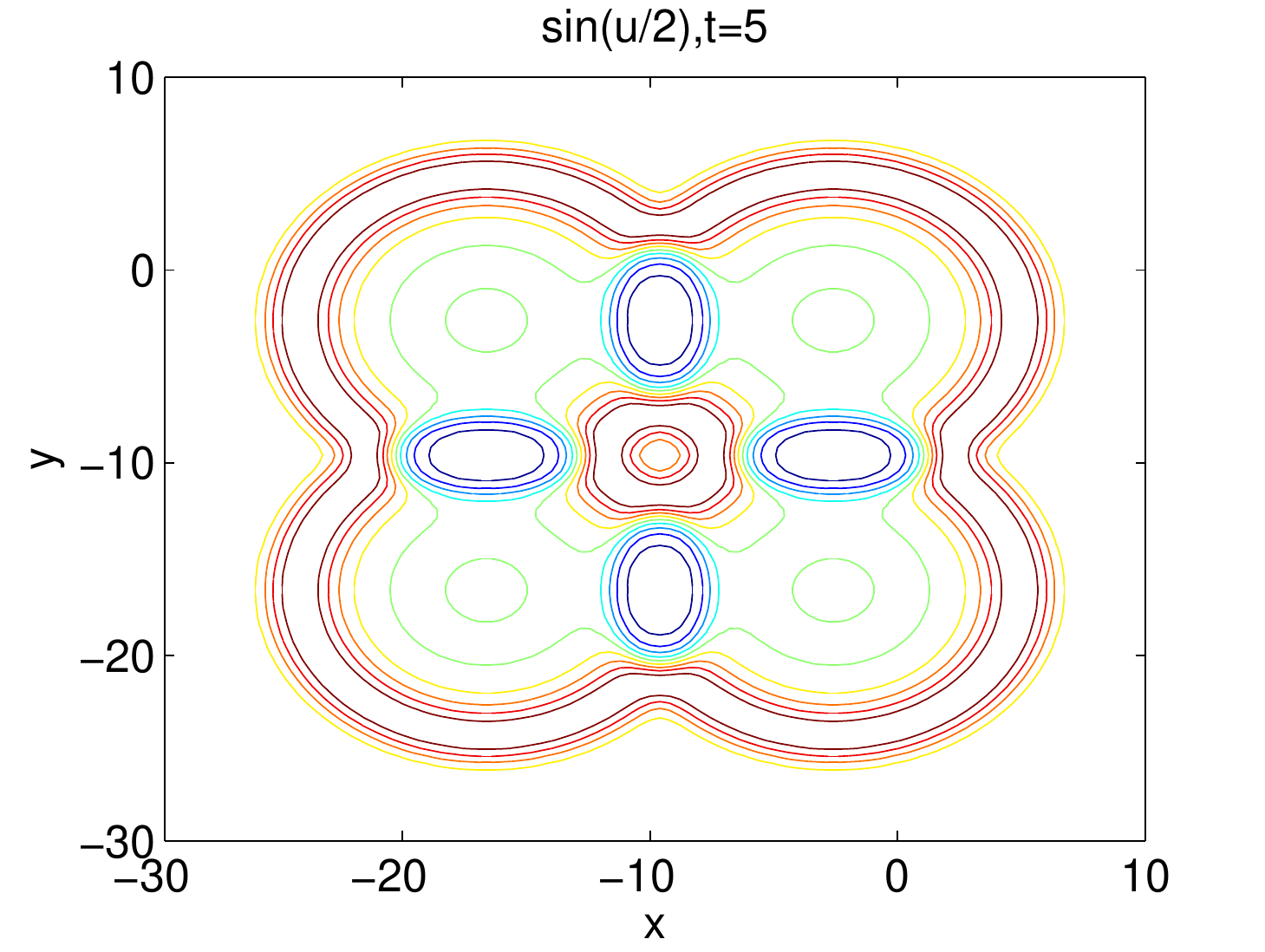}
\end{minipage}
\centering
\begin{minipage}[t]{60mm}
\includegraphics[width=60mm]{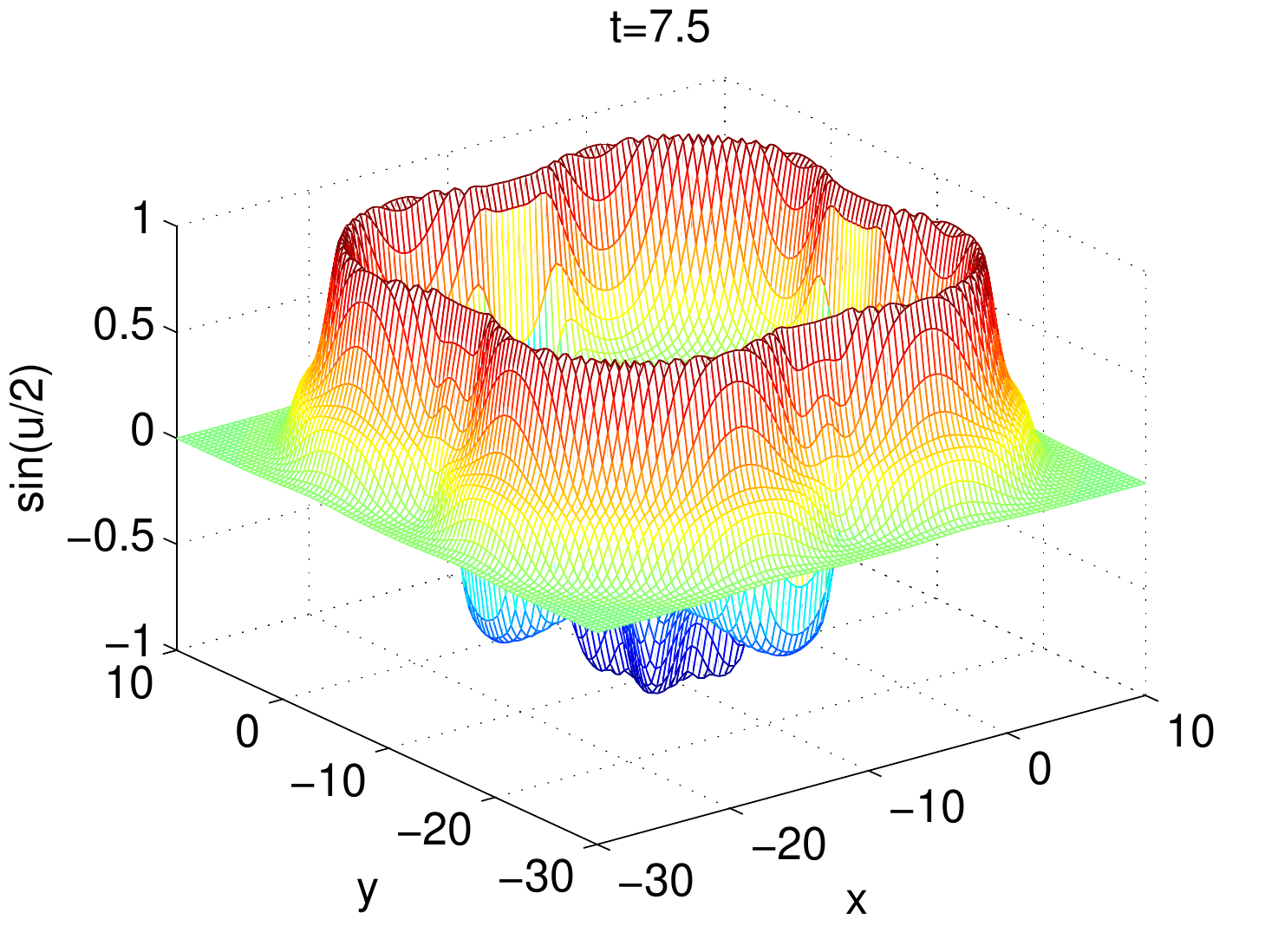}
\end{minipage}
\begin{minipage}[t]{60mm}
\includegraphics[width=60mm]{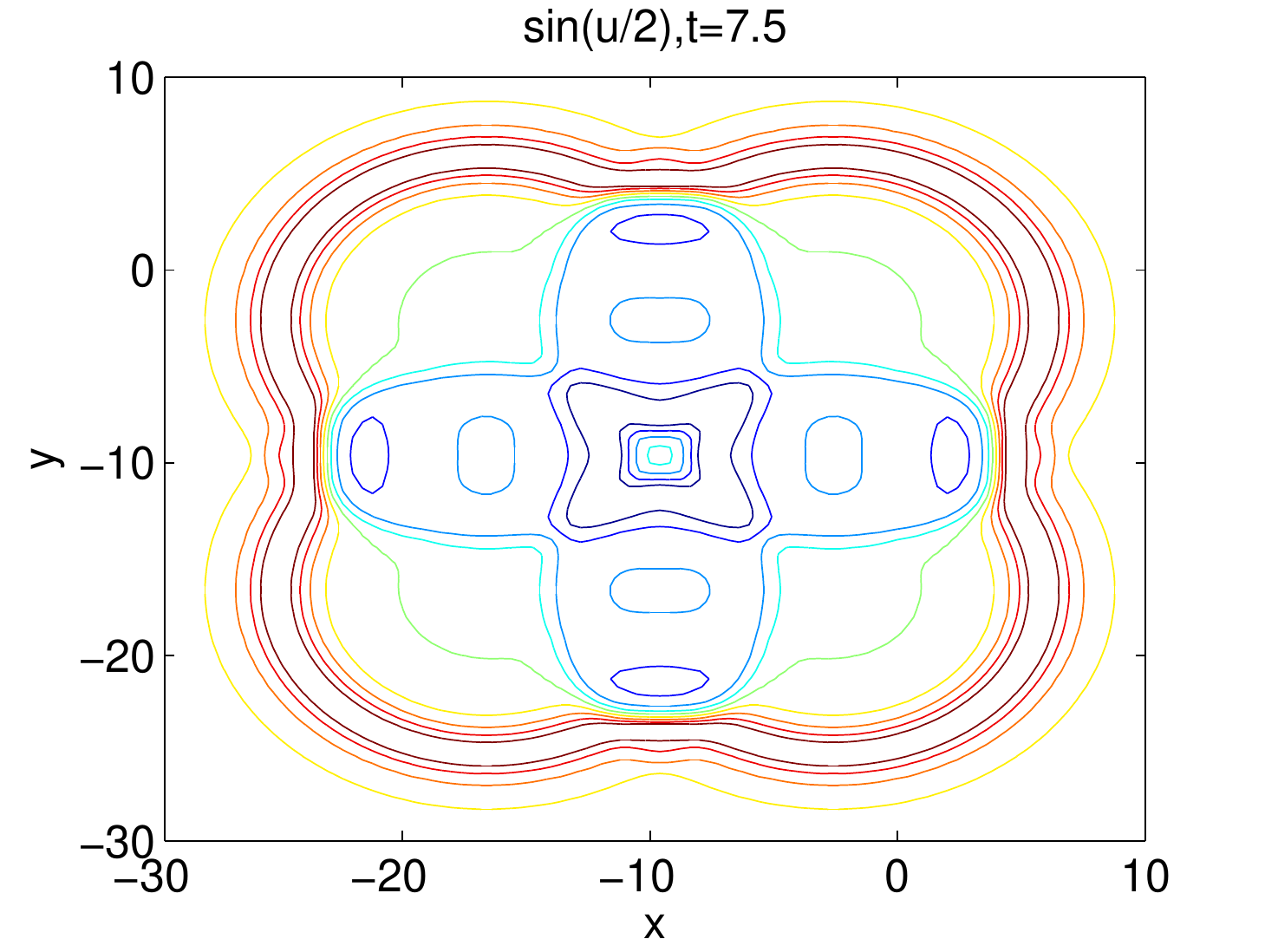}
\end{minipage}
\centering
\begin{minipage}[t]{60mm}
\includegraphics[width=60mm]{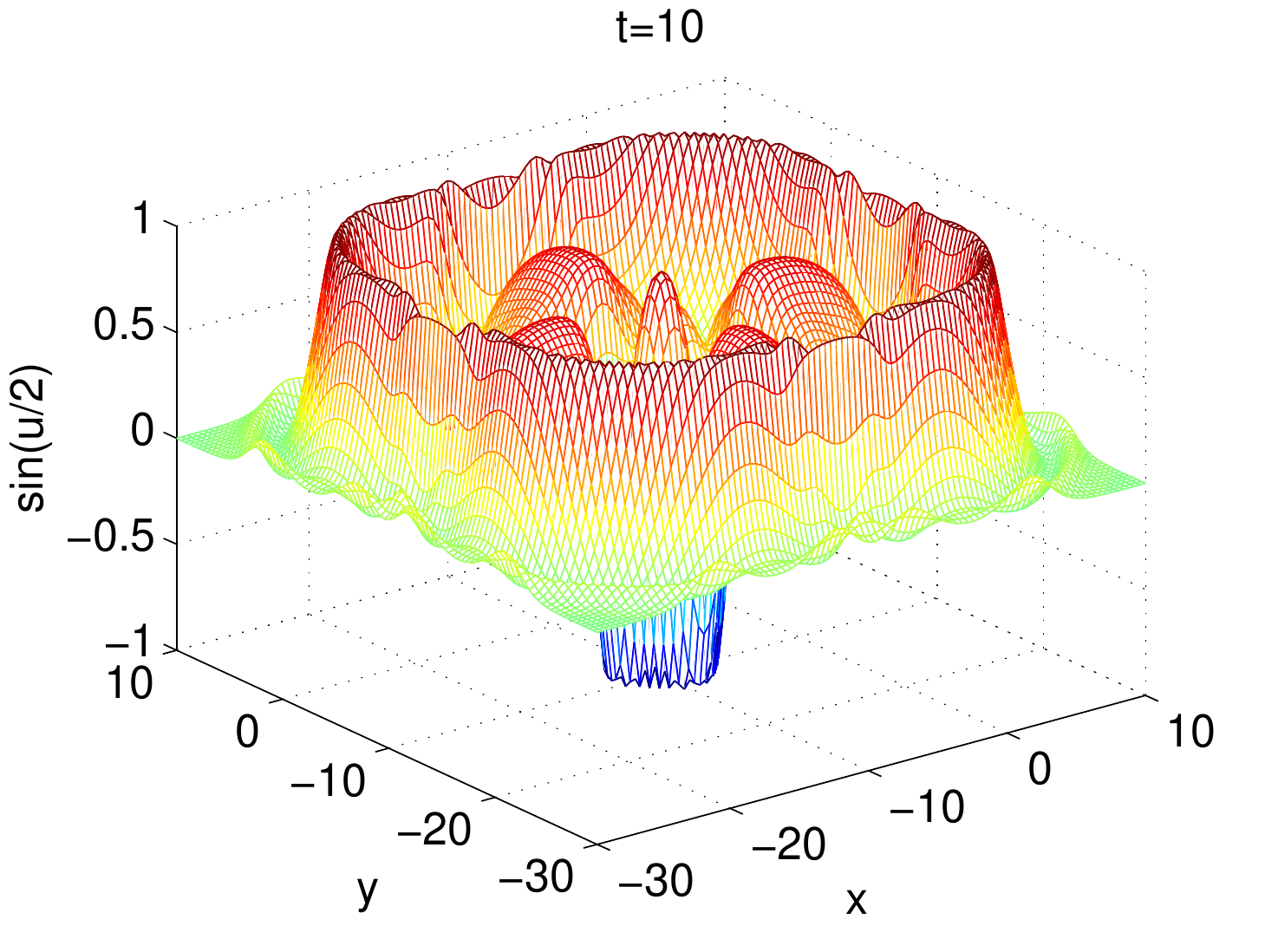}
\end{minipage}
\begin{minipage}[t]{60mm}
\includegraphics[width=60mm]{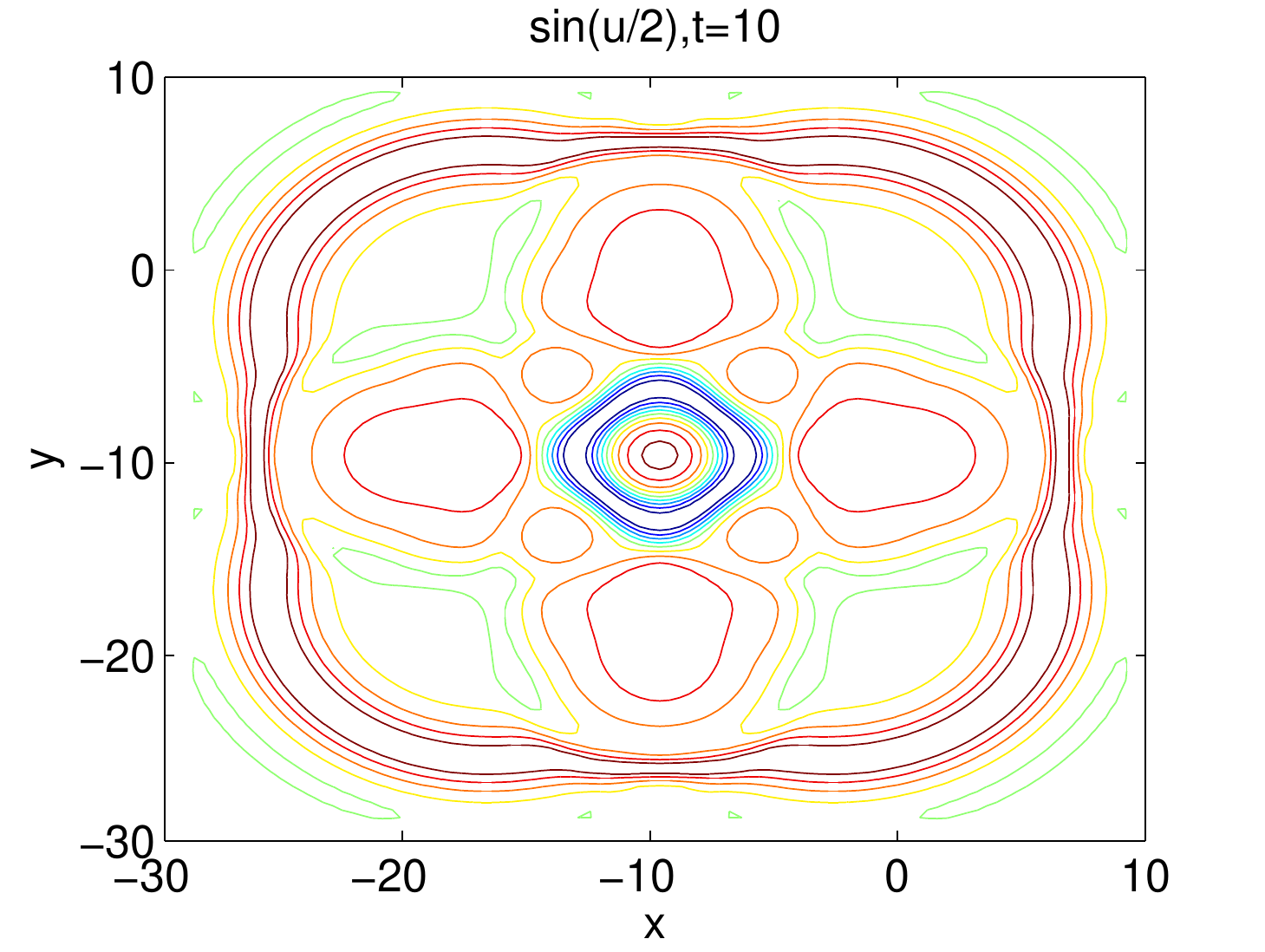}
\end{minipage}
\caption{The profile of $\sin(u/2)$ at $t=0,2.5,5,7.5$ and $10$ using time step $\tau=0.1$ and Fourier node $200\times 200$ with the proposed scheme for
two dimensional sine-Gordon equation \eqref{sine-gordon-equation}.}\label{scheme:fig:7}
\end{figure}

\begin{figure}[H]
\centering
\begin{minipage}[t]{70mm}
\includegraphics[width=70mm]{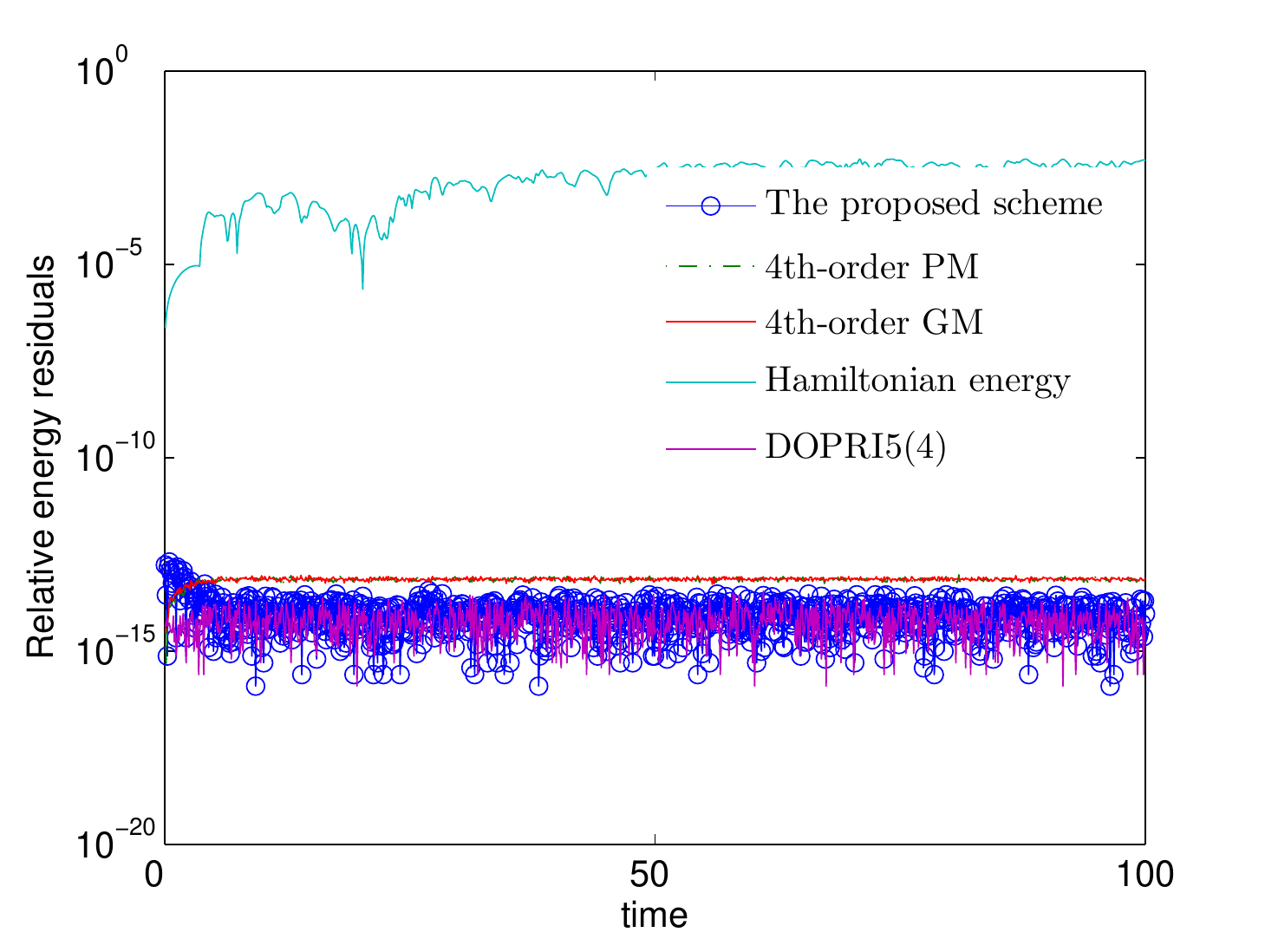}
\end{minipage}
\caption{The relative energy residuals including the discrete quadratic energy \eqref{sg-IFD-energy-conservation-law} using the four numerical schemes and the discrete Hamiltonian energy \eqref{sG-equation-Hamiltonian-energy-error} provided by the proposed scheme with time step $\tau=0.1$ and Fourier node $200\times 200$ for
two dimensional sine-Gordon equation \eqref{sine-gordon-equation}.}\label{scheme:fig:8}
\end{figure}

\section{Concluding remarks}\label{Sec:PM:7}
In this paper, we have presented a new systematic and unified way to develop explicit high-order energy-preserving methods for general Hamiltonian PDEs by combining the orthogonal projection method with the explicit RK methods. Numerical examples
are addressed to illustrate the accuracy, CPU time and invariants-preservation of the proposed
methods. Compared with the two existing energy-preserving schemes of same order, the proposed high-order schemes show remarkable efficiency.

We conclude this paper with some remarks. First, compared with the incremental direction method proposed in Ref. \cite{CHMR06}, the proposed method required to  evaluate the gradient of the energy, and can not preserve linear invariants and affine invariants, however, for the quadratic invariant, the analytical expression of the gradient for the invariant is easily obtained and numerical results show the computation cost of our method is much cheaper. In addition, the expression of the Lagrange multiplier $\lambda_n$ (see \eqref{lambda-value}) is more concise. Second, the proposed method might not work well for highly unstable systems, thus, implicit projections (e.g., see Ref. \cite{BIT12siam}) become necessary. However, such trade-offs among methods should be further investigated. Finally, for several invariants, the Lagrange multiplier of the proposed projection can not be explicitly obtained. Thus, a possible future work is to develop explicit high-order multiple invariants-preserving methods.

\section*{Acknowledgments}
Chaolong Jiang's work is partially supported by the National Natural Science Foundation of China (Grant No. 11901513), the Yunnan Provincial Department of Education Science Research Fund Project (Grant No. 2019J0956) and the Science and Technology Innovation Team on Applied Mathematics in Universities of Yunnan. Yushun Wang's work is partially supported by the National Natural Science Foundation of China (Grant No. 11771213). Yuezheng Gong's work is partially supported by the Natural Science Foundation of Jiangsu Province
(Grant No. BK20180413), the National Natural Science Foundation of China (Grant No. 11801269) and the Foundation of Jiangsu Key Laboratory for Numerical Simulation of Large Scale Complex
Systems (Grant No. 202002).

\bibliographystyle{plain}

\end{document}